\numberwithin{equation}{section}
\newtheorem{theorem}{Theorem}[section]
\newtheorem{corollary}[theorem]{Corollary}
\newtheorem{lemma}[theorem]{Lemma}
\newtheorem{proposition}[theorem]{Proposition}
\newtheorem{definition}[theorem]{Definition}
\theoremstyle{definition}
\newtheorem{remark}[theorem]{Remark}
\newcommand{\wt}[1]{\widetilde{#1}}
\def\c{\hfill\break}
\newcommand{\Cinf}{\ensuremath{\mathcal{C}^\infty}}
\newcommand{\Cinfc}{\ensuremath{\mathcal{C}^\infty_{\text{c}}}}
\renewcommand{\S}{\mathscr{S}}
\newcommand{\mb}[1]{\ensuremath{\mathbb{#1}}}
\newcommand{\N}{\mb{N}}
\newcommand{\R}{\mb{R}}
\newcommand{\C}{\mb{C}}
\newcommand{\G}{\ensuremath{{\cal G}}}
\newcommand{\Neg}{\mathcal{N}}
\newcommand{\Ginf}{\ensuremath{\G^\infty}}
\newcommand{\lara}[1]{\langle #1 \rangle}
\newcommand{\WF}{\mathrm{WF}}
\newcommand{\WFg}{\WF_{\mathrm{g}}}
\newcommand{\singsupp}{\mathrm{sing\, supp}}
\newcommand{\supp}{\mathrm{supp}}
\newcommand{\ssc}{\mathrm{sc}}
\newfont{\bigmath}{cmr12 at 13pt}
\newfont{\grecomath}{cmmi12 at 15pt}
\newcommand{\grad}{\ensuremath{\mbox{\rm grad}\,}}
\renewcommand{\div}{\ensuremath{\mbox{\rm div}\,}}
\newcommand{\beq}{\begin{equation}}
\newcommand{\eeq}{\end{equation}}
\newcommand{\eps}{\varepsilon}
\newcommand{\Om}{\Omega}
\renewcommand{\Re}{\ensuremath{\mathrm{Re}}}
\newcommand{\mM}{\mathcal{M}}
\begin{document}

\title{{\bf Symmetrisers and generalised solutions for strictly hyperbolic systems with singular coefficients}}
\author{Claudia Garetto\footnote{Supported by JRF, Imperial College London}\\[0.1cm]
Department of Mathematics\\
Imperial College London\\
\texttt{c.garetto@imperial.ac.uk}\\
\  \\
Michael Oberguggenberger\footnote{Partially supported by FWF (Austria), grant Y237}\\[0.1cm]
Institut f\"ur Grundlagen der Bauingenieurwissenschaften\\
Leopold-Franzens-Universit\"at Innsbruck\\
\texttt{michael.oberguggenberger@uibk.ac.at}
}
\maketitle
\begin{abstract}
This paper is devoted to strictly hyperbolic systems and equations with non-smooth coefficients. Below a certain level of smoothness, distributional solutions may fail to exist. We construct generalised solutions in the Colombeau algebra of generalised functions. Extending earlier results on symmetric hyperbolic systems, we introduce generalised strict hyperbolicity, construct symmetrisers, prove an appropriate G\r{a}rding inequality and establish existence, uniqueness and regularity of generalised solutions. Under additional regularity assumptions on the coefficients, when a classical solution of the Cauchy problem (or of a transmission problem in the piecewise regular case) exists, the generalised solution is shown to be associated with the classical solution (or the piecewise classical solution satisfying the appropriate transmission conditions).\\

AMS 2000 MSCS: 35S30, 46F30; 35B65.\\
{\bf Keywords}: hyperbolic equations and systems; algebras of generalised functions; non-smooth coefficients; symmetriser.
\end{abstract}

\setcounter{section}{-1}

\section{Introduction}
This paper is devoted to generalised solutions of linear strictly hyperbolic systems and higher order equations with non-smooth coefficients.
In particular, the coefficients might be so singular as not to admit solutions in the sense of distributions. Such a situation may arise when the regularity of the coefficients is lower than Lipschitz and the data and driving terms are distributions. Formally, this leads to products of distributions. H\"older continuous coefficients or coefficients with jump discontinuities in space arise in wave propagation in non-smooth media and are of importance, e.g., in seismology \cite{HHO:08}.

In order to stay in a framework that admits solving hyperbolic equations and systems with strong coefficient singularities, we employ the Colombeau theory of algebras of generalised functions. Solvability of hyperbolic first order systems of differential and pseudodifferential equations in Colombeau algebras (or their duals) has been established in the symmetric hyperbolic case \cite{GH:03, LO:91, O:89, O:92, O:07}. What has been open is the strictly hyperbolic case and the case of higher order equations. One of the principal goals of this paper is to close this gap. For this purpose, we introduce the notion of generalised strictly hyperbolic systems, construct a generalised symmetriser and prove inequalities of G\r{a}rding type for operators with Colombeau symbols. This allows us to establish existence, uniqueness and regularity of solutions in Colombeau algebras.

A second task is to relate the generalised solutions to classical ones, when the latter exist. Indeed, in certain sufficiently regular cases, classical solutions to the Cauchy problem do exist. For example, if the coefficients are piecewise smooth with jumps across smooth hypersurfaces, one may phrase the problem classically as a transmission problem (see, e.g., \cite{AliMehmeti:94} and references therein). In this case, two classical solutions may be constructed on either side of the interface and joined by suitable jump conditions. Further, in various hyperbolic systems in divergence form (e.g., the linearised Euler system in acoustics), $L^\infty$-regularity of the coefficients and $H^1$-regularity of the data suffice for the existence of a classical solution. In such cases, we set out to show that -- as a rule -- the generalised solution is associated with the classical solution (i.e., the representing nets of smooth functions have the classical solution as their distributional limit). In the piecewise smooth case, the nets representing the Colombeau solution thus already encode a transmission condition. In physical systems (e.g., acoustics), convergence to the piecewise classical solution satisfying the physically meaningful transmission conditions will be shown. Simple results of this type have already been obtained in some special cases \cite{LO:91, O:89, O:92}. We will complete the known results in this direction by studying the linearised Euler equations in acoustics and the wave equation with discontinuous coefficients.

To set our work in perspective, we give a short survey of classical results on hyperbolic equations with non-smooth coefficients. Systems with discontinuous coefficients have already been considered in the 1950ies and 1960ies, using jump conditions and energy methods \cite{Gelfand:59, Hurd:68, Kuznecov:63}. Second order hyperbolic equations with Lipschitz coefficients that depend on time only were a subject of research starting in the 1970ies and 1980ies, culminating in the result that the Log-Lipschitz property constitutes the lower bound on regularity for having existence and uniqueness of a classical solution \cite{Colombini:79, Colombini:95}. For coefficients depending both on time and space, see also \cite{Cicognani:87}. For the general theory of pseudodifferential and paradifferential operators with symbols of low regularity we refer to the monographs \cite{Hoermander:97, Taylor:91}. Another road of investigation has been the case of transport equations with discontinuous coefficients and/or measures as initial data. Renormalised solutions have been introduced in \cite{DiPerna:89}, $BV$-vector fields have been studied in \cite{Ambrosio:04, Colombini:02}, measure theoretic concepts have been exploited in \cite{Bouchut:98, Poupaud:97}. An exploration into products of distributions is in \cite{Hoermann:01}. For an in-depth survey on these and further results we refer to \cite{Ambrosio:08, Haller:08}. In summary, this line of research has aimed at pushing the regularity to its lower bounds in special equations. That there are lower limits is shown, e.g., by an example of a first order scalar equation in divergence form with a piecewise constant coefficient that does not admit a solution in the sense of distributions, even for constant initial data \cite{Hurd:68}. In contrast to this, we place ourselves in a general framework that admits generalised solutions for arbitrary (strictly) hyperbolic equations and systems with generalised function data. The relation to the previous literature is obtained by studying the limiting behaviour of the representing nets. For example, non-uniqueness of classical solutions is reflected in the dependence on the choice of regularisation. In ongoing research \cite{Delcroix:08, GH:05, GO:10a, O:07}, we are also developing an intrinsic characterisation of the properties of the solution in case no limit exists, exploiting the asymptotic behaviour of the representing nets.

The plan of the paper is as follows: we begin by recalling some notions from the theory of Colombeau algebras as well as on Sobolev continuity of pseudodifferential operators. In Section 2,  we consider linear hyperbolic systems with Colombeau symbols, introduce the corresponding notion of strict hyperbolicity and construct a symmetriser. The symmetriser is represented by a net of smooth functions whose asymptotic bounds have to be analysed so that the right mapping properties between Colombeau algebras are obtained. Next, we show that the symmetriser can be chosen positive definite in an appropriate generalised sense. To perform the symmetrisation up to a regularising error, the asymptotic scale of the regularisation parameter has to be chosen appropriately (slow scale to have existence of generalised solutions, logarithmic slow scale to have regularity, in addition). The necessity for these types of scales has been pointed out, e.g., in \cite{GGO:03}. In Sections 3 and 4 we establish existence, uniqueness and regularity results for strictly hyperbolic systems and higher order equations, respectively, in the Colombeau setting. Coming naturally with energy estimates, the results are formulated in terms of the Colombeau algebra based on Sobolev spaces. Section 5 addresses the limiting behaviour in the case of sufficiently regular coefficients. In the system of multidimensional acoustics, we show convergence to the classical solution for $L^\infty$-coefficients and $H^1$-data. For the one-dimensional wave equation with piecewise constant coefficients, we show convergence to the distributional solution of the corresponding transmission problem. We employ Kato's perturbation result for semigroups as well as Sobolev space techniques. Finally, the Appendix contains a proof of the required G\r{a}rding inequalities for generalised pseudodifferential operators.

\section{Basic notions}
This section collects some preliminary notions concerning the different kinds of nets and of quotient spaces used in the paper. For the sake of brevity we mainly report definitions and basic properties, referring to \cite{Garetto:ISAAC07, GGO:03, GKOS:01, Ruzhansky:09} for further details.
\subsection{Colombeau theory}

\subsubsection*{Nets of numbers}

A net $(u_\eps)_\eps$ in $\C^{(0,1]}$ is said to be \emph{strictly nonzero} if there exist $r>0$ and $\eta\in(0,1]$ such that $|u_\eps|\ge \eps^r$ for all $\eps\in(0,\eta]$.

For several regularity issues we will make use of the following concepts of \emph{slow scale net} and \emph{logarithmic slow scale net}. A net $\omega_\eps\in\R^{0,1]}$ is a \emph{slow scale net} iff $|\omega_\eps|=O(\eps^{-p})$ for all $p>0$. We say that $\omega_\eps\in\R^{0,1]}$ is a \emph{logarithmic slow scale net} if $|\omega_\eps|=O(\log^p(1/\eps))$ for all $p>0$. Note that the suffices \emph{sc} and \emph{lsc} will stand for slow scale and logarithmic slow scale respectively.

\subsubsection*{Generalised functions based on a locally convex topological vector space $E$}
The most common algebras of generalised functions of Colombeau type as well as the spaces of generalised symbols we deal with are introduced by means of the following general models.

Let $E$ be a locally convex topological vector space topologised through the family of seminorms $\{p_i\}_{i\in I}$. The elements of
\[
\begin{split}
\mM_E &:= \{(u_\eps)_\eps\in E^{(0,1]}:\, \forall i\in I\,\, \exists N\in\N\quad p_i(u_\eps)=O(\eps^{-N})\, \text{as}\, \eps\to 0\},\\
\mM^\ssc_E &:=\{(u_\eps)_\eps\in E^{(0,1]}:\, \forall i\in I\,\, \exists (\omega_\eps)_\eps\, \text{s.s.n.}\quad p_i(u_\eps)=O(\omega_\eps)\, \text{as}\, \eps\to 0\},\\
\mM^\infty_E &:=\{(u_\eps)_\eps\in E^{(0,1]}:\, \exists N\in\N\,\, \forall i\in I\quad p_i(u_\eps)=O(\eps^{-N})\, \text{as}\, \eps\to 0\},\\
\Neg_E &:= \{(u_\eps)_\eps\in E^{(0,1]}:\, \forall i\in I\,\, \forall q\in\N\quad p_i(u_\eps)=O(\eps^{q})\, \text{as}\, \eps\to 0\},
\end{split}
\]

are called $E$-moderate, $E$-moderate of slow scale type, $E$-regular and $E$-negligible, respectively. We define the space of \emph{generalised functions based on $E$} as the factor space $\G_E := \mM_E / \Neg_E$.

The ring of \emph{complex generalised numbers}, denoted by $\wt{\C}$, is obtained by taking $E=\C$. We remark that $\wt{\C}$ is not a field since by Theorem 1.2.38 in \cite{GKOS:01} only the elements which are strictly nonzero (i.e. the elements which have a representative strictly nonzero) are invertible and vice versa. Note that all the representatives of $u\in\wt{\C}$ are strictly nonzero as soon as there exists one representative which is strictly nonzero.

For any locally convex topological vector space $E$ the space $\G_E$ has the structure of a $\wt{\C}$-module. The ${\C}$-module $\G^\ssc_E:=\mM^\ssc_E/\Neg_E$ of \emph{slow scale regular generalised functions} and the $\wt{\C}$-module $\Ginf_E:=\mM^\infty_E/\Neg_E$ of \emph{regular generalised functions} are subrings of $\G_E$, which are characterised by more specific moderateness properties at the level of representatives. We use the notation $u=[(u_\eps)_\eps]$ for the class $u$ of $(u_\eps)_\eps$ in $\G_E$. This is the usual way adopted in the paper to denote an equivalence class.

\subsubsection*{Colombeau algebras and spaces of generalised symbols used in the paper}
The Colombeau algebra $\G(\Om)$ on an $\Om$ open subset of $\R^n$ is the factor space $\G_E$ with $E=\Cinf(\Om)$. For the hyperbolic problems studied in this paper we will mainly choose $E=\Cinf_{\rm{b}}([-T,T]\times\R^n)$, the space of infinitely differentiable functions all whose derivatives are bounded, and work in the corresponding Colombeau algebra $\G_{\rm{b}}([-T,T]\times\R^n)=\G_{\Cinf_{\rm{b}}([-T,T]\times\R^n)}$. It will be also convenient to work with Colombeau generalised functions based on Sobolev spaces. In particular, $\G_{2,2}(\R^n):=\G_{H^\infty(\R^n)}$ and $\G_{2,2}((-T,T)\times\R^n):=\G_{H^\infty((-T,T)\times\R^n)}$.

With the expression \emph{generalised symbol} we mean an element of the Colombeau space $\G_E$, where $E$ is a suitable space of symbols. For example $E=S^m(\R^{2n})$, $E=\Cinf([-T,T], S^m(\R^{2n}))$, etc. Specific moderateness properties can be required at the level of representatives. We recall that the \emph{slow scale regular generalised symbols} ($a=[(a_\eps)_\eps]\in\G^\ssc_{S^m(\R^{2n})}$) are those generated by nets in $\mM^\ssc_{S^m(\R^{2n})}$: more precisely,
\[
(a_\eps)_\eps\in \mM^\ssc_{S^m(\R^{2n})}\quad \Longleftrightarrow\quad \text{for all }\alpha,\beta\in\N^n\ \text{there exists a slow scale net }\omega_\eps\
\text{such that } |a_\eps|_{\alpha,\beta}^{(m)}=O(\omega_\eps),
\]
where $|a|^{(m)}_l$ denote the seminorm $\sup_{(x,\xi)\in\R^{2n}, |\alpha+\beta|\le l}\lara{\xi}^{-m+|\alpha|}|\partial^\alpha_\xi\partial^\beta_x a(x,\xi)|$.
Analogously one can define symbols which are \emph{logarithmic slow scale regular} by taking representing nets with the following property:
\[
\text{for all }\alpha,\beta\in\N^n\ \text{there exists a logarithmic slow scale net }\omega_\eps\ \text{such that } |a_\eps|_{\alpha,\beta}^{(m)}=O(\omega_\eps).
\]
We use the notations $(a_\eps)_\eps\in\mM^{\rm{lsc}}_{S^m(\R^{2n})}$ and $a\in\G^{\rm{lsc}}_{S^m(\R^{2n})}$.
In \cite{Garetto:ISAAC07, GGO:03} a complete symbolic calculus has been developed for slow scale regular generalised symbols. In the same vein, a complete symbolic calculus can be developed for logarithmic slow scale regular generalised symbols as well.

In general, the net of representing smooth functions $(u_\varepsilon)_\varepsilon$ of an element $u \in  \G(\Om)$ does not converge. However, if it converges weakly to a distribution $v\in {\mathcal D}'(\Om)$, then $u$ is said to admit $v$ as \emph{associated distribution}.

\subsection{Sobolev-boundedness of generalised pseudodifferential operators}
We collect some basic facts concerning pseudodifferential operators which will be employed in the sequel. Our main reference is \cite{Kumano-go:81}.
\subsubsection*{Sobolev spaces}
We recall that $\lara{D_x}^s$ denotes the pseudodifferential operator with symbol $\lara{\xi}^s=(1+|\xi|^2)^{\frac{s}{2}}$ and that the Sobolev space $H^s(\R^n)$ is the set of all $u\in\S'(\R^n)$ such that $\lara{D_x}^s u\in L^2(\R^n)$. $H^s(\R^n)$ is a Banach space with respect to the norm $\Vert u\Vert_s=\Vert \lara{D_x}^s u\Vert_{L^2(\R^n)}$. When $u\in H^s(\R^n)$ and $v\in H^{-s}(\R^n)$, the inequality $|\langle u,v\rangle)|\le \Vert u\Vert_s \Vert v\Vert_{-s}$ holds for the distributional action of $v$ on $u$.
\subsubsection*{Sobolev-boundedness theorem for pseudodifferential operators on $\R^n$}
Let $S^m(\R^{2n})$ be the space of H\"ormander symbols of order $m$ satisfying global estimates in $x$ and $\xi$. The corresponding pseudodifferential operator $a(x,D)$ defines a continuous map from $H^{s+m}(\R^n)$ to $H^s(\R^n)$. More precisely, see \cite[Theorem 2.7]{Kumano-go:81}, for each $s$ there exist a constant $C_s$ and an integer $l_s$ such that
\beq
\label{sob_continuity}
\Vert a(x,D)u\Vert_s\le C_s|a|^{(m)}_{l_s}\Vert u\Vert_{s+m}
\eeq
for all $u\in H^{s+m}(\R^n)$. Note that the constants $C_s$ and $l_s$ only depend on $s$ and the order of the symbol $a$.
\subsubsection*{Sobolev-boundedness of nets of pseudodifferential operators and of generalised pseudodifferential operators}
Let us now consider a net of symbols $(a_\eps)_\eps\in S^m(\R^{2n})^{(0,1]}$. It follows from the above that the inequality \eqref{sob_continuity} holds uniformly with respect to $\eps$: in other words,
\beq
\label{sob_continuity_eps}
\Vert a_\eps(x,D)u\Vert_s\le C_s|a_\eps|^{(m)}_{l_s}\Vert u\Vert_{s+m}
\eeq
for all $\eps\in(0,1]$ and $u\in H^{s+m}(\R^n)$. In particular, the net $(C_s|a_\eps|^{(m)}_{l_s})_\eps$ of continuity constants will inherit the moderateness property of the nets of symbols $(a_\eps)_\eps$. Hence, a generalised pseudodifferential operator $a(x,D)$ with symbol $a\in\G_{S^m(\R^{2n})}$ is well-defined as a map
\[
a(x,D):\G_{H^{s+m}(\R^n)}\to \G_{H^s(\R^n)}.
\]
This map is continuous with respect to the corresponding sharp topologies on the Colombeau spaces $\G_{H^{s+m}(\R^n)}$ and $\G_{H^s(\R^n)}$ (see \cite{Garetto:05a} for the definition of sharp topology).
\begin{remark}
\label{rem_G_2,2}
Let $\G_{2,2}(\R^n)$ be the Colombeau space based on $H^\infty(\R^n)$. From \eqref{sob_continuity_eps} it immediately follows that any generalised pseudodifferential operator $a(x,D)$ maps $\G_{2,2}(\R^n)$ continuously into itself.
\end{remark}

\section{Symmetrisable systems}
This section is devoted to first order systems of the following type:
\beq
\label{system}
\frac{\partial}{\partial t}u=Ku+f,
\eeq
where $K(t,x,D_x)$ is an $m\times m$-matrix of generalised pseudodifferential operators $k_{i,j}(t,x,D_x)$ with symbol in the space $\G_{\Cinf([-T,T], S^1(\R^{2n}))}$. We use the notation $K(t,x,\xi)$ for the corresponding matrix of symbols with entries $k_{i,j}(t,x,\xi)$, $i,j=1,...,m$. We say that $a\in\G_{\Cinf([-T,T], S^m(\R^{2n}))}$ is a \emph{classical} (or \emph{polyhomogeneous}) generalised symbol if there exists a sequence of symbols $a_{m-j}\in\G_{\Cinf([-T,T], S^{m-j}(\R^{2n}))}$, a sequence of representatives $(a_{m-j,\eps})_\eps$ and a representative $(a_\eps)_\eps$ of $a$ such that the following holds:
\begin{itemize}
\item[-] $a_{m-j,\eps}$ is homogeneous of degree $m-j$ in $\xi$ for $|\xi|\ge 1$, i.e.,
\[
a_{m-j,\eps}(t,x,s\xi)=s^{m-j}a_{m-j,\eps}(t,x,\xi)
\]
for $|\xi|\ge 1$ and $r\ge 1$;
\item[-] for each $r$ the net $(a_\eps-\sum_{j=0}^r a_{m-j,\eps})_\eps\in\mM_{\Cinf([-T,T], S^{m-r-1}(\R^{2n}))}$.
\end{itemize}
We refer to $a_m\in\G_{\Cinf([-T,T], S^m(\R^{2n}))}$ as the \emph{principal part} of $a\in\G_{\Cinf([-T,T], S^m(\R^{2n}))}$ or \emph{principal symbol} of the operator $a(x,D)$. Clearly it is uniquely defined modulo $\G_{\Cinf([-T,T], S^{m-1}(\R^{2n}))}$. Note that when one works with more specific kinds of moderateness (i.e., slow scale, logarithmic slow scale) the same kind of moderateness is involved in the corresponding notion of asymptotic expansion. Hence the principal part of a slow scale (or logarithmic slow scale) regular classical symbol is slow scale (or logarithmic slow scale) regular as well.

\subsection{Symmetriser: definition and existence in the generalised strictly hyperbolic case}
From now on we assume that $K(t,x,\xi)$ has a classical generalised symbol, i.e., with entries in the space $\G_{\Cinf([-T,T], S^1(\R^{2n}))}$ and we denote its principal symbol by $K_1(t,x,\xi)$. We introduce the following notion of symmetriser and symmetrisable system.
\begin{definition}
\label{def_symmetriser}
A symmetriser for $\partial/\partial t-K$ is a smooth one parameter family of operators $R=R(t,x,D_x)$ such that
\begin{itemize}
\item[(i)] the entries of $R$ are given by classical generalised symbols in $\G_{\Cinf([-T,T], S^0(\R^{2n}))}$;
\item[(ii)] $R_0(t,x,\xi)$ is a positive definite matrix for $|\xi|\ge 1$, in the sense that there exists a representative $(R_{0,\eps})_\eps$ and a constant $c>0$ such that
\[
\overline{z}^{T}R_{0,\eps}(t,x,\xi)z\ge c|z|^2
\]
for all $z\in\C^m$, for all $t\in[-T,T]$, $x\in\R^n$, $\eps\in(0,1]$ and for all $|\xi|\ge 1$
\item[(iii)] $RK+(RK)^\ast$ is a matrix of pseudodifferential operators with symbol in $\G_{\Cinf([-T,T], S^0(\R^{2n}))}$.
\end{itemize}
If such a symmetriser exists, we say that $\partial/\partial t-K$ is symmetrisable.
\end{definition}
\begin{definition}
\label{def_sh}
The operator $\partial/\partial t-K$ is called \emph{generalised strictly hyperbolic} if there exists a representative $(K_{1,\eps})_\eps$ of the principal symbol $K_1(t,x,\xi)$ having $m$ pure imaginary eigenvalues $(i\lambda_{j,\eps})_\eps$ and a strictly nonzero net $(\lambda_\eps)_\eps$ such that the bound from below
\beq
\label{bound_fb}
\lambda_{j+1,\eps}(t,x,\xi)-\lambda_{j,\eps}(t,x,\xi)\ge \lambda_\eps\lara{\xi}
\eeq
holds for all $j=1,...,m-1$, for all $\eps\in(0,1]$, $t\in[-T,T]$, $x\in\R^n$ and for $\xi$ away from zero.
\end{definition}
The expression \emph{$\xi$ away from zero} stands for $|\xi|\ge r$ with $0<r<1$.

After truncation at zero, the eigenvalues $i\lambda_j$ in Definition \ref{def_sh} generate symbols with the same asymptotic properties (in $\varepsilon$) as the matrix $K_1(t,x,\xi)$. Indeed, using arguments as in the proof of Proposition 6.4 in \cite{Mizohata:73} one gets the following result.
\begin{lemma}
\label{lemma_eigen}
The nets $(\lambda_{j,\eps})$ generates $m$ distinct and real classical symbols $\lambda'_j$ in $\G_{\Cinf([-T,T], S^1(\R^{2n}))}$ such that $\lambda'_j(t,x,\xi)=\lambda_j(t,x,\xi)$ for $|\xi|$ large enough.
\end{lemma}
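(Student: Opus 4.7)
The plan is to imitate the classical proof of smoothness of simple eigenvalues of a matrix symbol (as in \cite{Mizohata:73}, Proposition~6.4) pointwise in $\eps$, then track the $\eps$-dependence to ensure moderateness.

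First, fix $\eps\in(0,1]$ and work on the region $|\xi|\ge 1$. By the gap \eqref{bound_fb} the matrix $K_{1,\eps}(t,x,\xi)$ has $m$ simple eigenvalues $i\lambda_{1,\eps},\dots,i\lambda_{m,\eps}$, so each spectral projector is given by the Dunford--Riesz integral
\[
P_{j,\eps}(t,x,\xi)=\tfrac{1}{2\pi i}\oint_{\gamma_{j,\eps}}(z\,I-iK_{1,\eps}(t,x,\xi))^{-1}\,dz,
\]
where $\gamma_{j,\eps}$ is a circle of radius $\lambda_\eps\lara{\xi}/4$ around $i\lambda_{j,\eps}$. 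Together with the trace formula $i\lambda_{j,\eps}=\mathrm{tr}(iK_{1,\eps}P_{j,\eps})$, this exhibits $\lambda_{j,\eps}$ as a smooth function of $(t,x,\xi)$ there; homogeneity of degree $1$ in $\xi$ for $|\xi|\ge 1$ is inherited from that of $K_{1,\eps}$. Introducing a cutoff $\chi\in\Cinf(\R^n)$ with $\chi(\xi)=0$ near $\xi=0$ and $\chi(\xi)=1$ for $|\xi|\ge 1$, I set $\lambda'_{j,\eps}(t,x,\xi):=\chi(\xi)\lambda_{j,\eps}(t,x,\xi)$, extending smoothly through the transition region via the resolvent formula (a harmless rescaling of the constants in the cutoff makes the whole support of $\chi$ fall in the region where the gap is effective). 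By construction $\lambda'_{j,\eps}$ is smooth on $\R^{2n}$, coincides with $\lambda_{j,\eps}$ for $|\xi|\ge 1$, is homogeneous of degree $1$ there, and hence defines a classical symbol of order $1$ whose principal part is $\lambda'_{j,\eps}$ itself with trivial lower-order expansion.

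The second step is to verify moderateness in $\Cinf([-T,T],S^1(\R^{2n}))$. Each derivative $\d_t^k\d_x^\beta\d_\xi^\alpha\lambda'_{j,\eps}$ is a sum of products of derivatives of entries of $K_{1,\eps}$ divided by powers of the spectral gaps $\lambda_{k,\eps}-\lambda_{j,\eps}$. By \eqref{bound_fb} each such gap is bounded below by $\lambda_\eps\lara{\xi}$, and since $(\lambda_\eps)_\eps$ is strictly nonzero the net $(\lambda_\eps^{-1})_\eps$ is moderate in $\wt{\C}$. Combined with the $\Cinf([-T,T],S^1)$-moderateness of $(K_{1,\eps})_\eps$, an induction on the total order of differentiation yields symbol bounds of the form
\[
|\d_t^k\d_x^\beta\d_\xi^\alpha\lambda'_{j,\eps}(t,x,\xi)|\le C_{k,\alpha,\beta}\,\eps^{-N_{k,\alpha,\beta}}\,\lara{\xi}^{1-|\alpha|},
\]
uniformly in $(t,x)$ and $\eps$. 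Realness is automatic since $i\lambda_{j,\eps}$ is pure imaginary, and distinctness of the $\lambda'_j$ in the generalised sense is an immediate consequence of \eqref{bound_fb}.

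The main obstacle is precisely this inductive estimate: one must check that each application of $\d_\xi$ both gains a factor $\lara{\xi}^{-1}$ (keeping the symbol order) and introduces at most one additional negative power of $\lambda_\eps$, so that the resulting constants remain moderate. This balance is secured by the form of the gap condition, where each factor $(\lambda_{k,\eps}-\lambda_{j,\eps})^{-1}$ contributes exactly one copy of $\lambda_\eps^{-1}$ and one copy of $\lara{\xi}^{-1}$, so that the homogeneity in $\xi$ and the moderateness in $\eps$ decouple cleanly.
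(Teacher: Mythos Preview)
Your argument is correct and reaches the same conclusion, but the route differs from the paper's. The paper works directly with the characteristic polynomial $P_\eps(\lambda;t,x,\xi)=\det(\lambda I+iK_{1,\eps}(t,x,\xi))=\prod_j(\lambda-\lambda_{j,\eps})$ and obtains the derivatives of each eigenvalue by implicit differentiation,
\[
\frac{\partial\lambda_{j,\eps}}{\partial x_k}=-\biggl(\frac{\partial P_\eps/\partial x_k}{\partial P_\eps/\partial\lambda}\biggr)_{\lambda=\lambda_{j,\eps}},\qquad
\frac{\partial\lambda_{j,\eps}}{\partial\xi_k}=-\biggl(\frac{\partial P_\eps/\partial\xi_k}{\partial P_\eps/\partial\lambda}\biggr)_{\lambda=\lambda_{j,\eps}},
\]
using the gap condition to bound the denominator from below by $c'_\eps\lara{\xi}^{m-1}$ and then inducting on the order of differentiation; the cutoff to extend through $\xi=0$ is the same as yours. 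Your resolvent/trace approach (recovering $\lambda_{j,\eps}$ as the trace of $K_{1,\eps}P_{j,\eps}$ with $P_{j,\eps}$ given by a Dunford integral) is a legitimate alternative and in fact anticipates the construction the paper carries out only later, in Proposition~\ref{prop_symmetriser}, when building the symmetriser. The paper's method is more elementary---no contour integrals are needed at this stage, and the symbol estimates drop out of a single rational formula---whereas your method has the advantage of producing the projectors simultaneously. One small inaccuracy worth noting: your claim that each $\partial_\xi$ introduces ``at most one additional negative power of $\lambda_\eps$'' understates the count, since already the first derivative carries a factor $\bigl(\partial P_\eps/\partial\lambda\bigr)^{-1}$ of size $\lambda_\eps^{-(m-1)}\lara{\xi}^{-(m-1)}$; but $(\lambda_\eps)_\eps$ is strictly nonzero, so any fixed power of $\lambda_\eps^{-1}$ is moderate and the conclusion is unaffected.
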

\begin{proof}
We work at the level of representatives. Let $P_\eps(\lambda;t,x,\xi):=({\rm{det}}(\lambda I+iK_{1,\eps}(t,x,\xi))=\Pi_{1\le j\le m}(\lambda-\lambda_{j,\eps}(t,x,\xi))$. The bound from below \eqref{bound_fb} yields
\[
\biggl|\biggl(\frac{\partial P_\eps}{\partial\lambda}\biggr)_{\lambda=\lambda_{j,\eps}}\biggr|\ge c'_\eps\lara{\xi}^{m-1}
\]
for $t\in[-T,T]$, $x\in\R^n$ and $|\xi|\ge r$. Since each eigenvalue can be bounded by the norm of the matrix $K_{1,\eps}(t,x,\xi)$ we have the estimate
\[
|\lambda_{j,\eps}(t,x,\xi)|\le \mu_\eps\lara{\xi},
\]
valid for $t\in[-T,T]$ and $(x,\xi)\in\R^{2n}$, where $(\mu_\eps)_\eps$ is a moderate net depending on the coefficients of $K_{1,\eps}$. Passing to the first order derivatives from $P_\eps(\lambda;t,x,\xi)=\Pi_{1\le j\le m}(\lambda-\lambda_{j,\eps}(t,x,\xi))$ we get, for $|\xi|\ge r$,
\beq
\label{1_deriv_Mizo}
\frac{\partial\lambda_{j,\eps}}{\partial x_k}=-\biggl(\frac{\frac{\partial P_\eps}{\partial x_k}}{\frac{\partial P_\eps}{\partial\lambda}}\biggr)_{\lambda=\lambda_{j,\eps}},\qquad \frac{\partial\lambda_{j,\eps}}{\partial \xi_k}=-\biggl(\frac{\frac{\partial P_\eps}{\partial \xi_k}}{\frac{\partial P_\eps}{\partial\lambda}}\biggr)_{\lambda=\lambda_{j,\eps}}.
\eeq
An induction argument shows that each eigenvalue $\lambda_{j,\eps}(t,x,\xi)$ satisfies moderate symbol estimates of order $1$ for $|\xi|\ge r$. From the fact that $P_\eps$ is homogeneous of order $m$ in $\xi$ for $|\xi|\ge r$ we obtain that each net $(\lambda_{j,\eps})_\eps$ is homogeneous of order $1$ on the same $\xi$-domain. Let now $\psi\in\Cinf(\R^n)$, $\psi(\xi)=0$ for $|\xi|\le r$ and $\psi(\xi)=1$ for $|\xi|\ge \alpha r$, with $\alpha r<1$. The net $\lambda'_{j,\eps}(t,x,\xi)=\psi(\xi)\lambda_{j,\eps}(t,x,\xi)$ generates a real classical symbol $\lambda'_j$ in $\G_{\Cinf([-T,T], S^1(\R^{2n}))}$ which coincides with $\lambda_j$ for $|\xi|\ge \alpha r$.
\end{proof}
\begin{remark}
From the previous lemma it is clear that when $K_1$ has slow scale entries then the eigenvalues are slow scale regular as well. Note that one can slightly generalise Definition \ref{def_sh} by allowing a strictly nonzero radius $(r_\eps)_\eps$. Finally, in order to get slow scale nets of eigenvalues one combines the slow scale assumption on the entries of $K_1$ with the condition that $(r_\eps)_\eps$ as well as $(\lambda_\eps)_\eps$ is the inverse of a slow scale net.
\end{remark}
\begin{remark}
\label{remark_adjoint}
In the sequel we will make use of the adjoint $A(t,x,D_x)^\ast$ of a matrix $A(t,x,D_x)$ of pseudodifferential operators. By definition $A(t,x,D_x)^\ast$ fulfills the relation $(A(t,x,D_x)u,v)_{{L^2(\R^n)}^m}=(u,A(t,x,D_x)^\ast v)_{{L^2(\R^n)}^m}$ for all $u,v\in\S(\R^n)$. At the symbol level we have that, modulo lower order terms, $A(t,x,\xi)^\ast=\overline{A(t,x,\xi)}^T$. All these statements are valid for generalised pseudodifferential operators as well.
\end{remark}

\begin{proposition}
\label{prop_symmetriser}
Any generalised strictly hyperbolic operator $\partial/\partial t-K$ is symmetrisable.
\end{proposition}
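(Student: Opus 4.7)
The plan is to construct the symmetriser at the level of representatives via spectral projectors associated with the eigenvalues of Lemma \ref{lemma_eigen}, and then to verify that the resulting net of smooth symbols satisfies properties (i)--(iii) of Definition \ref{def_symmetriser}. I fix a representative $(K_{1,\eps})_\eps$ of the principal symbol and the corresponding nets of real eigenvalues $(\lambda_{j,\eps})_\eps$ from Definition \ref{def_sh}, extended to classical generalised symbols of order one on all of $\R^{2n}$ as in Lemma \ref{lemma_eigen}. Pick a cut-off $\psi\in\Cinf(\R^n)$ with $\psi(\xi)=0$ for $|\xi|\le r$ and $\psi(\xi)=1$ for $|\xi|\ge 1$, and set
\[
\pi_{j,\eps}(t,x,\xi):=\psi(\xi)\prod_{k\ne j}\frac{K_{1,\eps}(t,x,\xi)-i\lambda_{k,\eps}(t,x,\xi)I}{i(\lambda_{j,\eps}(t,x,\xi)-\lambda_{k,\eps}(t,x,\xi))}.
\]
Wherever $\psi=1$, these are the standard spectral projectors onto the eigenspaces of $K_{1,\eps}$, so $\pi_{j,\eps}^2=\pi_{j,\eps}$, $\sum_{j}\pi_{j,\eps}=I$, $K_{1,\eps}=\sum_{j}i\lambda_{j,\eps}\pi_{j,\eps}$, and each $\pi_{j,\eps}$ is homogeneous of degree zero in $\xi$ for $|\xi|\ge 1$.

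Next I verify that $(\pi_{j,\eps})_\eps\in\mM_{\Cinf([-T,T],S^0(\R^{2n}))}$. The numerators are moderate nets in $\Cinf([-T,T],S^1(\R^{2n}))$ by Lemma \ref{lemma_eigen}, while on $\supp\psi$ each factor $(\lambda_{j,\eps}-\lambda_{k,\eps})^{-1}$ is bounded by $\lambda_\eps^{-1}\lara{\xi}^{-1}$. Iterating the quotient rule, every derivative in $(x,\xi)$ introduces only additional factors of $\lambda_\eps^{-N}\lara{\xi}^{-1}$ together with moderate derivatives of $K_{1,\eps}$ and $\lambda_{j,\eps}$; since $(\lambda_\eps)_\eps$ is strictly nonzero, all such factors are moderate, and the resulting bounds are exactly those of a symbol of order zero. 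Hence each $\pi_{j,\eps}$ defines a classical generalised symbol in $\G_{\Cinf([-T,T],S^0(\R^{2n}))}$ with homogeneous principal part $\pi_{j,0}$ given by the formula above (without cut-off).

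I then define the symmetriser by
\[
R_\eps(t,x,\xi):=\sum_{j=1}^m\overline{\pi_{j,\eps}(t,x,\xi)}^{T}\,\pi_{j,\eps}(t,x,\xi),
\]
which still lies in $\G_{\Cinf([-T,T],S^0(\R^{2n}))}$ with principal symbol $R_{0,\eps}=\sum_{j}\pi_{j,\eps}^{\ast}\pi_{j,\eps}$, so (i) holds. For $|\xi|\ge 1$, using $\sum_{j}\pi_{j,\eps}=I$ and Cauchy--Schwarz, for every $z\in\C^m$ one has
\[
|z|^2=\Bigl|\sum_{j=1}^m\pi_{j,\eps}z\Bigr|^2\le m\sum_{j=1}^m|\pi_{j,\eps}z|^2=m\,\overline{z}^{T}R_{0,\eps}(t,x,\xi)z,
\]
which gives (ii) with $c=1/m$ uniformly in $\eps,t,x$ and in $|\xi|\ge 1$. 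For (iii), the symbolic calculus (applied $\eps$-wise, the constants being moderate) yields that the principal symbol of $RK+(RK)^{\ast}$ is
\[
R_0K_1+(R_0K_1)^{\ast}=\sum_{j=1}^m\pi_j^{\ast}\bigl(i\lambda_j+\overline{i\lambda_j}\bigr)\pi_j=0,
\]
because the $\lambda_j$ are real and the $\pi_j$ are mutually orthogonal projectors at the principal level; therefore $RK+(RK)^{\ast}$ has full symbol in $\G_{\Cinf([-T,T],S^0(\R^{2n}))}$.

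The main obstacle is the second step: tracking the moderateness of the spectral projectors, because every differentiation of the Lagrange-interpolation denominators introduces higher negative powers of $\lambda_\eps$, which a priori only qualify as moderate (not slow scale). These bounds suffice for Definition \ref{def_symmetriser}; promoting $R$ to a slow scale (or logarithmic slow scale) classical symbol, as required for the later existence and regularity theorems, would need $(\lambda_\eps)_\eps$ to be the reciprocal of a slow scale (respectively logarithmic slow scale) net together with the corresponding regularity of $K_1$, as noted in the remark preceding the proposition.
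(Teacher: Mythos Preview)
Your proof is correct and follows essentially the same route as the paper: both construct the spectral projectors via the Lagrange interpolation formula, cut off near $\xi=0$, set $R=\sum_j P_j^\ast P_j$, and verify (i)--(iii) by the same computations. The only cosmetic differences are that the paper defines $R$ as an operator composition (whereas you define it directly at the symbol level, which gives the same principal part) and records the weaker constant $c=1/m^2$ in (ii), while your Cauchy--Schwarz argument yields the sharper $c=1/m$.
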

\begin{proof}
We have to construct an operator matrix $R$ fulfilling Definition \ref{def_symmetriser}. We begin by writing the projections $P_{j,\eps}$ onto the associated eigenspaces of $i\lambda_{j,\eps}$. For the moment we take $\xi\neq 0$ and we work with $\xi/|\xi|$. Since from the hypothesis of generalised strict hyperbolicity we have
\[
\lambda_{j+1,\eps}(t,x,\xi/|\xi|)-\lambda_{j,\eps}(t,x,\xi/|\xi|)\ge \lambda_\eps,
\]
with a strictly nonzero net $(\lambda_\eps)_\eps$, we can find a curve $\gamma_{j,\eps}(t,x,\xi/|\xi|)$ around the point $i\lambda_{j,\eps}(t,x,\xi/|\xi|)$ such that all $i\lambda_{h,\eps}(t,x,\xi/|\xi|)$ with $ h\neq j$ do not belong to the closed domain with boundary $\gamma_{j,\eps}(t,x,\xi/|\xi|)$. Hence,
\[
P_{j,\eps}(t,x,\xi/|\xi|)=\frac{1}{2\pi i}\int_{\gamma_{j,\eps}}(zI-K_{1,\eps}(t,x,\xi/|\xi|))^{-1}\, dz.
\]
Easy computations, as in \cite[Proposition 2]{RuzhaWirth:08} lead to the following formula
\beq
\label{formula_Pj}
P_{j,\eps}(t,x,\xi/|\xi|)=\prod_{h\neq j}\frac{-iK_{1,\eps}(t,x,\xi/|\xi|)-\lambda_{h,\eps}(t,x,\xi/|\xi|)I}{\lambda_{j,\eps}(t,x,\xi/|\xi|)-\lambda_{h,\eps}(t,x,\xi/|\xi|)}
\eeq
The hypothesis \eqref{bound_fb} of strict hyperbolicity combined with the fact that $K_{1,\eps}$ is homogeneous of order $1$ in $\xi$ implies that $P_{j,\eps}$ is homogeneous of order $0$ in $\xi$ (for $\xi\neq 0$). More precisely, from Lemma \ref{lemma_eigen} and \eqref{formula_Pj} we easily obtain that $P_{j,\eps}(t,x,\xi)=P_{j,\eps}(t,x,\xi/|\xi|)$ is a matrix with entries in $\mM_{\Cinf([-T,T], S^0(\R^{2n}\setminus 0))}$. Truncating around $0$ (for instance taking a cut-off function identically $0$ for $|\xi|\le 1/2$ and identically $1$ for $|\xi|\ge 1$) and keeping the same operator notation we obtain that the operators $P_j(t,x,D_x)=[(P_{j,\eps}(t,x,D_x))_\eps]$ are defined by matrices $P_j(t,x,\xi)$ of classical generalised symbols in $\G_{\Cinf([-T,T], S^0(\R^{2n}))}$. Let now
\[
R(t,x,D_x):=\sum_{j=1}^m P_j(t,x,D_x)^\ast P_j(t,x,D_x),
\]
where for the definition of $ P_j(t,x,D_x)^\ast$ we refer to Remark \ref{remark_adjoint}. The entries of the corresponding matrix $R(t,x,\xi)$ are classical generalised symbols in $\G_{\Cinf([-T,T], S^0(\R^{2n}))}$. Moreover, the principal symbol $R_0(t,x,\xi)$ is given by $\sum_{j=1}^m\overline{P_j(t,x,\xi)}^{\,T} P_j(t,x,\xi)$. Choosing the representatives $P_{j,\eps}$ as above we have that $R_{0,\eps}(t,x,\xi)=\sum_{j=1}^m P_{j,\eps}(t,x,\xi)^\ast P_{j,\eps}(t,x,\xi)$ fulfills the following bound from below for any $z\in\C^n$ and for $|\xi|\ge 1$:
\[
\overline{z}^{T}R_{0,\eps}(t,x,\xi)z=\sum_{j=1}^m\vert P_{j,\eps}(t,x,\xi)z\vert^2\ge \frac{1}{m^2}\vert z\vert^2.
\]
Before proceeding we observe for $\xi\neq 0$ we have $\sum_{j=1}^m i\lambda_j(t,x,\xi)P_j(t,x,\xi)=K_1(t,x,\xi)$, ${P_j}(t,x,\xi)^2=P_j(t,x,\xi)$ and $P_jP_h(t,x,\xi)=0$ for $j\neq h$. It follows that modulo $\G_{\Cinf([-T,T], S^0(\R^{2n}))}$ the symbol of $RK$ is given by
\[
i\sum_{j=1}^m\lambda_j(t,x,\xi)\overline{P_j(t,x,\xi)}^T P_j(t,x,\xi).
\]
Hence, once more modulo $\G_{\Cinf([-T,T], S^0(\R^{2n}))}$ the symbol of $(RK)^\ast$ is
\[
-i\sum_{j=1}^m\lambda_j(t,x,\xi)\overline{P_j(t,x,\xi)}^T P_j(t,x,\xi).
\]
and then $(RK)+(RK)^\ast$ is a matrix of generalised pseudodifferential operators of order $0$. This means that $R$ is a symmetriser for the operator $\partial/\partial t-K$.

\end{proof}
\subsection{An $L^2$-positive definite symmetriser and the corresponding $L^2$-energy estimates}
Note that the symmetriser $R$ constructed above is $L^2$-self-adjoint, i.e., $R=R^\ast$. Moreover a straightforward application of the G\r{a}rding inequality as stated in the Appendix yields a symmetriser $S$ which is positive definite on $L^2(\R^n)$. In the sequel $(\cdot,\cdot)$ denotes the scalar product in $L^2$. The corresponding norm is denoted by $\Vert\cdot\Vert$ or equivalently $\Vert\cdot\Vert_0$ (in the Sobolev space notation).
\begin{proposition}
\label{prop_symm_S}
Let $R$ be the symmetriser constructed in the proof of Proposition \ref{prop_symmetriser}. Then there exists a symmetriser $S$ such that
\begin{itemize}
\item[(i)] $R=S$ (modulo a matrix of pseudodifferential operators with symbol in $\G_{\Cinf([-T,T], S^{-1}(\R^{2n}))}$),
\item[(ii)] $S$ is $L^2$-positive definite, i.e. there exists a representative $(S_{\eps})_\eps$ and a constant $c>0$ such that
\beq
\label{S_pos}
(S_{\eps}u,u)\ge c\Vert u\Vert^2
\eeq
for all $u\in {L^2(\R^n)}^m$ and all $\eps\in(0,1]$.
\end{itemize}
\end{proposition}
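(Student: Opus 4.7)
The plan is to apply the matrix-valued G\r{a}rding inequality of the Appendix to $R$, which yields lower-boundedness up to a lower-order error, and then to absorb that error with a smoothing, low-frequency correction, so as to obtain a strict $L^2$-lower bound while staying in the equivalence class of $R$ modulo symbols of order $-1$.

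The symmetriser of Proposition \ref{prop_symmetriser} has the explicit form $R=\sum_{j=1}^m P_j^\ast P_j$, so each representative $R_\eps$ is automatically $L^2$-self-adjoint and $(R_\eps u,u)\in\R$; its principal symbol $R_0(t,x,\xi)=\sum_j\overline{P_j(t,x,\xi)}^T P_j(t,x,\xi)$ is Hermitian and positive definite with the $\eps$-uniform lower bound $\overline z^T R_{0,\eps}(t,x,\xi) z\ge m^{-2}|z|^2$ for $|\xi|\ge 1$ already established in the proof of Proposition \ref{prop_symmetriser}. Applying the G\r{a}rding inequality of the Appendix I obtain a moderate net $(C_\eps)_\eps$, controlled by seminorms of the subprincipal part of $R$, such that
\[
(R_\eps u,u)\ge\frac{1}{2m^2}\|u\|_0^2-C_\eps\|u\|_{-1/2}^2,\qquad u\in (L^2(\R^n))^m,\ \eps\in(0,1].
\]

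To absorb the negative term, I fix $\chi\in\Cinfc(\R^n)$ with $\chi=1$ on $\{|\xi|\le 1\}$ and $\chi=0$ on $\{|\xi|\ge 2\}$, set $\chi_N(\xi):=\chi(\xi/N)$, and use the elementary splitting
\[
\|u\|_{-1/2}^2=\int\lara{\xi}^{-1}|\hat u(\xi)|^2\,d\xi\le\|\chi_N(D_x)u\|_0^2+\lara{N}^{-1}\|u\|_0^2.
\]
Choosing $N=N_\eps$ so that $C_\eps\lara{N_\eps}^{-1}\le (4m^2)^{-1}$ (possible with $(N_\eps)_\eps$ moderate because $(C_\eps)_\eps$ is) and setting
\[
S_\eps:=R_\eps+C_\eps\,\chi_{N_\eps}(D_x)^\ast\chi_{N_\eps}(D_x)\,\mathrm{Id},
\]
one obtains at once $(S_\eps u,u)\ge(4m^2)^{-1}\|u\|_0^2$, with the constant independent of $\eps$, proving (ii). Because $\chi_{N_\eps}$ has compact frequency support, the correction lies in every $S^{-k}$; its seminorms in any fixed symbol class are controlled by moderate powers of $N_\eps$ and by $C_\eps$, so $S-R$ has representatives moderate in $\Cinf([-T,T],S^{-1}(\R^{2n}))$, giving (i). That $S$ is still a symmetriser in the sense of Definition \ref{def_symmetriser} is immediate: the principal symbol is unchanged, and $(S-R)K$ has symbol of order $0$, so $SK+(SK)^\ast$ retains order $0$.

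The main obstacle is the Colombeau moderateness bookkeeping. The G\r{a}rding inequality of the Appendix must deliver a constant $C_\eps$ which is a moderate net, and the cutoff scale $N_\eps$ chosen to absorb this constant then dictates the growth of the symbol seminorms of the correction; both have to be compatible with the target class $\G_{\Cinf([-T,T],S^{-1}(\R^{2n}))}$. The $\eps$-independence of the lower bound on $R_{0,\eps}$ is crucial here: it guarantees that the positivity constant obtained in (ii) is a genuine positive real, as demanded by the statement, rather than a Colombeau generalised number that would have to be shown strictly nonzero separately.
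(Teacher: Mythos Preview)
Your argument is correct and follows the paper's overall strategy: apply the matrix G\r{a}rding inequality from the Appendix to $R$, then add a nonnegative low-order correction to absorb the $-C_\eps\|u\|_{-1/2}^2$ term. The difference is in the choice of correction. The paper simply takes
\[
S_\eps=R_\eps+c_{1,\eps}\lara{D_x}^{-1}I,
\]
using the exact identity $(\lara{D_x}^{-1}u,u)=\|u\|_{-1/2}^2$ so that the G\r{a}rding remainder cancels on the nose, with no scale to choose and no further estimate needed; the correction is visibly an order $-1$ symbol with moderate seminorms $c_{1,\eps}$. Your scaled cutoff $C_\eps\,\chi_{N_\eps}(D_x)^\ast\chi_{N_\eps}(D_x)$ reaches the same conclusion via the splitting $\|u\|_{-1/2}^2\le\|\chi_N(D_x)u\|_0^2+\lara{N}^{-1}\|u\|_0^2$ and the moderate choice of $N_\eps$; this even gives a smoothing (order $-\infty$) correction, but at the cost of tracking the growth of the $S^{-1}$-seminorms in $N_\eps$, which the paper's route avoids entirely.

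One point you pass over lightly: Theorem~\ref{theo_Garding} in the Appendix requires the lower bound $\Re R_\eps(t,x,\xi)\ge cI$ on the \emph{full} symbol for $|\xi|\ge r_\eps$, not just on the principal part $R_{0,\eps}$. The paper spends a line bridging this, observing that the order $-1$ remainder contributes at most $\mu_\eps\lara{\xi}^{-1}|z|^2$, so that for $|\xi|\ge r_\eps:=2m^2\mu_\eps$ one has $\Re R_\eps\ge(2m^2)^{-1}I$. You implicitly rely on this when you say the G\r{a}rding constant is ``controlled by seminorms of the subprincipal part,'' but it should be made explicit.
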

\begin{proof}
We begin by estimating $\Re\, R_\eps(t,x,\xi)$. Since
\[
\Re\, R_\eps(t,x,\xi)=R_{0,\eps}(t,x,\xi)+\frac{R_{-1,\eps}(t,x,\xi)+R_{-1,\eps}(t,x,\xi)^\ast}{2},
\]
we obtain from the properties of $R_{0,\eps}$ and the symbol order of ${R_{-1,\eps}+R^\ast_{-1,\eps}}$ that
\[
\overline{z}^T\Re\, R_\eps(t,x,\xi)z\ge \frac{1}{m^2}|z|^2-\lara{\xi}^{-1}\mu_\eps|z|^2\ge \frac{1}{m^2}|z|^2-|\xi|^{-1}\mu_\eps|z|^2.
\]
This bound from below holds for $t\in[-T,T]$, $(x,\xi)\in\R^{2n}$ and $\eps\in(0,1]$ with the moderate net $(\mu_\eps)_\eps$ depending on the symbol estimates of ${R_{-1,\eps}+R^\ast_{-1,\eps}}$. Hence for $r_\eps:=2m^2\mu_\eps$ and for $|\xi|\ge r_\eps$ we get
\[
\overline{z}^T\Re\, R_\eps(t,x,\xi)z\ge \frac{1}{2m^2}|z|^2.
\]
It follows that for $c=1/(2m^2)$ the inequality
\[
\Re\,R_\eps(t,x,\xi)\ge cI
\]
holds for $|\xi|\ge r_\eps$. An application of the G\r{a}rding inequality in Theorem \ref{theo_Garding} entails
\[
\Re(R_\eps(t,x,D_x)u,u)\ge c\Vert u\Vert^2_0-c_{1,\eps}\Vert u\Vert^2_{-1/2},\qquad\qquad u\in L^2(\R^n),
\]
for some moderate net $(c_{1,\eps})_\eps$. Let us now define
\[
S_\eps(t,x,D_x)=R_\eps(t,x,D_x)+c_{1,\eps}\lara{D_x}^{-1}I.
\]
It is clear that the corresponding generalised operator $S$ is equal to $R$ modulo a matrix of operators of order $-1$. Furthermore it is $L^2$-positive definite. Indeed, since $S_\eps(t,x,D_x)^\ast=S_\eps(t,x,D_x)$ we have that $(S_{\eps}u,u)$ is real and from the G\r{a}rding inequality above
\begin{multline*}
(S_{\eps}u,u)= (R_\eps u,u)+c_{1,\eps}(\lara{D_x}^{-1}Iu,u)\ge c\Vert u\Vert^2_0-c_{1,\eps}\Vert u\Vert^2_{-1/2}+c_{1,\eps}(\lara{D_x}^{-1}Iu,u)\\
=c\Vert u\Vert^2_0-c_{1,\eps}\Vert u\Vert^2_{-1/2}+c_{1,\eps}(\lara{D_x}^{-1/2}Iu,\lara{D_x}^{-1/2}Iu)\\
=c\Vert u\Vert^2_0-c_{1,\eps}\Vert u\Vert^2_{-1/2}+c_{1,\eps}\Vert u\Vert^2_{-1/2}.
\end{multline*}
Concluding
\[
(S_{\eps}u,u) \ge c\Vert u\Vert^2_0.
\]
\end{proof}

The symmetriser $S$ yields an immediate energy estimate for the net of solutions $(u_\eps)_\eps$ to the equation
\beq
\label{eq_repr}
\frac{\partial}{\partial t}u_\eps=K_\eps(t,x,D_x)u_\eps+f_\eps,
\eeq
with initial data $u_\eps(0,\cdot)=g_\eps$. More precisely, we assume that the operator $\partial/\partial t-K$ is generalised strictly hyperbolic, we take $(K_\eps)_\eps$ with principal part $(K_{1,\eps})_\eps$ as in Definition \ref{def_sh}, $f_\eps\in \Cinf([-T,T], L^2(\R^n))^m$ and $g_\eps\in {L^2(\R^n)}^m$. We have
\beq
\label{ee_0}
\begin{split}
\frac{d}{dt}(S_\eps u_\eps,u_\eps)&=(\partial_t S_\eps u_\eps,u_\eps)+(S_\eps\partial_t u_\eps,u_\eps)+(S_\eps u_\eps,\partial_t u_\eps)\\
&=(\partial_t S_\eps u_\eps,u_\eps)+(S_\eps K_\eps u_\eps +S_\eps f_\eps,u_\eps)+(S_\eps u_\eps,K_\eps u_\eps +f_\eps)\\
&=(\partial_t S_\eps u_\eps,u_\eps)+ ((S_\eps K_\eps+K_\eps^\ast S_\eps)u_\eps,u_\eps)+(S_\eps f_\eps, u_\eps)+(S_\eps u_\eps,f_\eps)\\
&\le c_\eps(\partial_t S_\eps)\Vert u_\eps\Vert^2+c_\eps(S_\eps K_\eps+K_\eps^\ast S_\eps)\Vert u_\eps\Vert^2+2c_\eps(S_\eps)\Vert f_\eps\Vert \Vert u_\eps\Vert\\
&\le c_\eps(\partial_t S_\eps)\Vert u_\eps\Vert^2+c_\eps(S_\eps K_\eps+K_\eps^\ast S_\eps)\Vert u_\eps\Vert^2+c_\eps(S_\eps)(\Vert u_\eps\Vert^2+\Vert f_\eps\Vert^2)\\
&\le C_\eps(\partial_t S_\eps,S_\eps,S_\eps K_\eps+K_\eps^\ast S_\eps)(\Vert u_\eps\Vert^2+\Vert f_\eps\Vert^2)\\
&\le C_\eps c^{-1}(S_\eps u_\eps,u_\eps)+C_\eps\Vert f_\eps\Vert^2,
\end{split}
\eeq
where the net $(C_\eps)_\eps$ depends on symbol seminorms of $(S_\eps)_\eps$, $(\partial_t S_\eps)_\eps$ and  $(S_\eps K_\eps+K_\eps^\ast S_\eps)_\eps$ and the constant $c$ appears in the bound from below \eqref{S_pos}.

An application of Gronwall's lemma gives the estimate
\beq
\label{1_ineq}
\Vert u_\eps(t,\cdot)\Vert^2\le c^{-1}(S_\eps u_\eps,u_\eps)\le c^{-1}d_\eps \exp({c^{-1}C_\eps t}),
\eeq
valid for all $\eps\in(0,1]$ and $t\in[-T,T]$ with
\[
d_\eps=(S_\eps(0,x,D_x)g_\eps,g_\eps)+C_\eps\sup_{-T\le t\le T}\Vert f_\eps(t,\cdot)\Vert^2 T.
\]
\subsection{$H^l$-energy estimates for the solution net $(u_\eps)_\eps$}
The $L^2$-positive definite symmetriser $S$ constructed in the previous subsection can be employed to obtain $H^l$-energy estimates for the net $(u_\eps)_\eps$ for any $l\in\R$. Before proceeding with the proof we recall that $H^l$ (and analogously $(H^l)^m$) is a Hilbert space with the scalar product defined by $(u,v)_l=(\lara{D_x}^l I u,\lara{D_x}^l Iv)$, where $\lara{D_x}^l$ is the pseudodifferential operator with symbol $\lara{\xi}^l$. For the sake of simplicity in the sequel we use the short notation $\lara{D_x}^l$ for the matrix $\lara{D_x}^lI$. We now argue on $u_\eps, f_\eps\in \Cinf([-T,T], H^l(\R^n))^m$ and $g_\eps\in {H^l(\R^n)}^m$. The fact that $S_\eps$ is $L^2$-positive definite yields
\[
\Vert u_\eps\Vert_l^2=\Vert \lara{D_x}^{l}u_\eps\Vert^2\le c^{-1}(S_\eps\lara{D_x}^l u_\eps,\lara{D_x}^l u_\eps).
\]
In analogy with \eqref{ee_0} we have
\[
\begin{split}
&\frac{d}{dt}(S_\eps \lara{D_x}^l u_\eps,\lara{D_x}^l u_\eps)=(\partial_t S_\eps \lara{D_x}^l u_\eps,\lara{D_x}^l u_\eps)+(S_\eps\lara{D_x}^l\partial_t u_\eps,\lara{D_x}^l u_\eps)+(S_\eps \lara{D_x}^l u_\eps,\lara{D_x}^l\partial_t u_\eps)\\
&=(\partial_t S_\eps \lara{D_x}^l u_\eps,\lara{D_x}^l u_\eps)+(S_\eps\lara{D_x}^l K_\eps u_\eps +S_\eps\lara{D_x}^l f_\eps,\lara{D_x}^lu_\eps)+(S_\eps\lara{D_x}^l u_\eps,\lara{D_x}^l(K_\eps u_\eps +f_\eps))\\
&\le c_\eps(\partial_t S_\eps)\Vert u_\eps\Vert^2_l+(S_\eps\lara{D_x}^l K_\eps u_\eps,\lara{D_x}^lu_\eps)+ 2\Re\, (S_\eps\lara{D_x}^l f_\eps,\lara{D_x}^lu_\eps)+(S_\eps\lara{D_x}^l u_\eps,\lara{D_x}^l K_\eps u_\eps)\\
&\le c_\eps(\partial_t S_\eps)\Vert u_\eps\Vert^2_l+c_\eps(S_\eps)(\Vert u_\eps\Vert^2_l+\Vert f_\eps\Vert^2_l)+ (S_\eps\lara{D_x}^l K_\eps u_\eps,\lara{D_x}^lu_\eps)+(S_\eps\lara{D_x}^l u_\eps,\lara{D_x}^l K_\eps u_\eps).
\end{split}
\]
Writing $(S_\eps\lara{D_x}^l K_\eps u_\eps,\lara{D_x}^lu_\eps)+(S_\eps\lara{D_x}^l u_\eps,\lara{D_x}^l K_\eps u_\eps)$ as
\begin{multline*}
(S_\eps K_\eps\lara{D_x}^l u_\eps, \lara{D_x}^l u_\eps)+(S_\eps[\lara{D_x}^l,K_\eps]u_\eps,\lara{D_x}^lu_\eps)\\
+(S_\eps\lara{D_x}^lu_\eps,K_\eps\lara{D_x}^l u_\eps)+(S_\eps\lara{D_x}^l u_\eps,[\lara{D_x}^l,K_\eps]u_\eps)
\end{multline*}
we arrive at
\beq
\label{ee_l}
\begin{split}
&\frac{d}{dt}(S_\eps \lara{D_x}^l u_\eps,\lara{D_x}^l u_\eps)\\
&\le c_\eps(\partial_t S_\eps, S_\eps)(\Vert u_\eps\Vert^2_l+\Vert f_\eps\Vert^2_l)+((S_\eps K_\eps+ K_\eps^\ast S_\eps)\lara{D_x}^l u_\eps,\lara{D_x}^lu_\eps)+2\Re\,(S_\eps[\lara{D_x}^l,K_\eps]u_\eps,\lara{D_x}^lu_\eps)\\
&\le C_{l,\eps}(\partial_t S_\eps,S_\eps,S_\eps K_\eps+K_\eps^\ast S_\eps,S_\eps[\lara{D_x}^l,K_\eps])(\Vert u_\eps\Vert^2_l+\Vert f_\eps\Vert^2_l)\\
&\le C_{l,\eps} c^{-1}(S_\eps \lara{D_x}^l u_\eps,\lara{D_x}^lu_\eps)+C_{l,\eps}\Vert f_\eps\Vert^2_l.
\end{split}
\eeq
Returning to the Cauchy problem \eqref{eq_repr}, we obtain by applying the Gronwall lemma to the energy estimates \eqref{ee_l} that
\beq
\label{1_ineq_l}
\Vert u_\eps(t,\cdot)\Vert^2_l\le c^{-1}(S_\eps\lara{D_x}^l u_\eps,\lara{D_x}^l u_\eps)\le c^{-1}d_{l,\eps} \exp({c^{-1}C_{l,\eps} t}),
\eeq
valid for all $\eps\in(0,1]$ and $t\in[-T,T]$ with
\[
d_{l,\eps}=(S_{\eps}(0,x,D_x)\lara{D_x}^l g_\eps,\lara{D_x}^l g_\eps)+C_{l,\eps}\sup_{-T\le t\le T}\Vert f_\eps(t,\cdot)\Vert^2_l T.
\]
and $(C_{l,\eps})_\eps$ depending on the symbol seminorms of $(S_{\eps})_\eps$, $(\partial_t S_{\eps})_\eps$, $(S_{\eps} K_\eps+K_\eps^\ast S_{\eps})_\eps$ and $(S_\eps[\lara{D_x}^l,K_\eps])_\eps$.

\subsection{Choice of scale for the generalised strictly hyperbolic case}
From the estimates \eqref{1_ineq} and \eqref{1_ineq_l} it is clear that the nets $(c^{-1}C_\eps)_\eps$ and $(c^{-1}C_{l,\eps})_\eps$ need to be of logarithmic type in order to generate a Colombeau solution from the net $(u_\eps)_\eps$. This logarithmic behaviour is entailed by the following assumptions on the principal part $K_1(t,x,D_x)$ and the corresponding eigenvalues $\lambda_j$.
\begin{proposition}
\label{prop_log_sc}
Let $K_1(t,x,D_x)$ have symbol entries which are logarithmic slow scale regular. Let the operator $\partial/\partial t-K(t,x,D_x)$ be generalised strictly hyperbolic with eigenvalues $\lambda_j$ such that the bound from below \eqref{bound_fb} is true for some net $(\lambda_\eps)_\eps$, inverse of a logarithmic slow scale net. Let $S$ be the symmetriser constructed in Proposition \ref{prop_symm_S}. Then, the entries of $S$ and $SK+K^\ast S$ are given by $0$-order logarithmic slow scale regular generalised symbols.
\end{proposition}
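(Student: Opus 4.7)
The plan is to propagate the logarithmic slow scale (lsc) property through every stage of the construction of $S$ in Propositions \ref{prop_symmetriser} and \ref{prop_symm_S}, relying on two elementary facts: finite sums and products of lsc nets remain lsc, and the reciprocal of the inverse of an lsc net is again lsc. Since by hypothesis the symbol seminorms of $K_{1,\eps}$ are lsc and $1/\lambda_\eps$ is lsc, all quantities entering the formulas for the eigenvalues, the projections and the symmetriser are compatible with the lsc calculus.

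First I would revisit Lemma \ref{lemma_eigen} in the lsc setting. The bound \eqref{bound_fb} combined with $1/\lambda_\eps$ being lsc gives $|(\partial P_\eps/\partial\lambda)_{\lambda=\lambda_{j,\eps}}|\ge c'_\eps\lara{\xi}^{m-1}$ with $1/c'_\eps$ lsc; substituting into \eqref{1_deriv_Mizo} and iterating shows that every symbol seminorm of $\lambda_{j,\eps}$ is dominated by an lsc net, so the truncated $\lambda'_j$ are lsc regular classical symbols of order one, exactly as anticipated in the remark following Lemma \ref{lemma_eigen}. Formula \eqref{formula_Pj} then expresses each $P_{j,\eps}$ as a finite product of numerators $(-iK_{1,\eps}-\lambda_{h,\eps}I)$ (lsc seminorms) divided by $(\lambda_{j,\eps}-\lambda_{h,\eps})$ (bounded below by $\lambda_\eps$, hence with lsc reciprocal). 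Finite products and their derivatives preserve the lsc property, so after the cutoff near zero the symbols $P_{j,\eps}$ have lsc order-zero seminorms, and summing yields $R\in\G^{\rm{lsc}}_{\Cinf([-T,T],S^0(\R^{2n}))}$.

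Passing from $R$ to $S$, I would trace through the proof of Proposition \ref{prop_symm_S}: the radius $r_\eps=2m^2\mu_\eps$ is lsc because $\mu_\eps$ is controlled by order-$(-1)$ seminorms of $R_\eps$, and the constant $c_{1,\eps}$ supplied by the generalised G\r{a}rding inequality (Theorem \ref{theo_Garding}) depends on finitely many symbol seminorms of $R_\eps$ together with $r_\eps$; both being lsc forces $c_{1,\eps}$ to be lsc, so the correction $c_{1,\eps}\lara{D_x}^{-1}I$ is lsc of order $-1$ and $S$ is lsc of order $0$. For $SK+K^\ast S=(RK+K^\ast R)+c_{1,\eps}(\lara{D_x}^{-1}K+K^\ast\lara{D_x}^{-1})$, the first summand was already shown to be of order $0$ in Proposition \ref{prop_symmetriser}, and the cancellation of its principal symbols leaves remainders built from finite compositions of lsc symbols via the lsc symbolic calculus, hence lsc of order $0$; the second summand is automatically of order $0$ (order $-1$ composed with order $1$) multiplied by the lsc constant $c_{1,\eps}$. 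The principal obstacle I anticipate is in this last step: confirming that the constant $c_{1,\eps}$ produced by Theorem \ref{theo_Garding} depends only on lsc data of the symbol, rather than on arbitrary moderate seminorms, requires a careful inspection of the Appendix proof of the generalised G\r{a}rding inequality.
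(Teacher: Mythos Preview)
Your proposal is correct and follows essentially the same route as the paper: propagate the lsc property through the eigenvalues (Lemma~\ref{lemma_eigen}), the projectors \eqref{formula_Pj}, the symmetriser $R$, and finally $S=R+c_{1,\eps}\lara{D_x}^{-1}I$. The obstacle you correctly single out---that $c_{1,\eps}$ from the G\r{a}rding inequality must itself be lsc---is exactly what the paper addresses by invoking Theorem~\ref{theo_Garding_23}(ii), the lsc-specific version of G\r{a}rding, rather than the generic Theorem~\ref{theo_Garding}; so your anticipated inspection of the Appendix is precisely what the paper packages into that separate statement.
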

\begin{proof}
Under the assumptions that $K_{1,\eps}(t,x,\xi)$ and $(\lambda_\eps)_\eps$ are both logarithmic slow scale regular one easily sees that the eigenvalues $(\lambda_{j,\eps})_\eps$, as well as the projectors $P_{j,\eps}(t,x,\xi)$,  $j=1,...,m$, are logarithmic slow scale regular. It follows that the symmetriser $R(t,x,D_x)$, as constructed in the proof of Proposition \ref{prop_symmetriser}, is given by a matrix of generalised symbols which are logarithmic slow scale regular, i.e. symbols in $\G^{\rm{lsc}}_{\Cinf([-T,T], S^0(\R^{2n}))}$. Hence $RK+(RK)^\ast$ has symbols entries in $\G^{\rm{lsc}}_{\Cinf([-T,T], S^0(\R^{2n}))}$ as well. Following the construction of $S$ from $R$ as in the proof of Proposition \ref{prop_symm_S} and by applying G\r{a}rding's inequality of Theorem \ref{theo_Garding_23}$(ii)$ we easily see that $S$ is generated by
\[
S_\eps=R_\eps+c_{1,\eps}\lara{D_x}^{-1}I,
\]
where $(c_{1,\eps})_\eps$ is a logarithmic slow scale net. Hence, it has symbol entries in $\G^{\rm{lsc}}_{\Cinf([-T,T], S^0(\R^{2n}))}$, $S=S^\ast$, $SK+K^\ast S$ is of order $0$ and logarithmic slow scale regular and $(S_\eps u,u)\ge c\Vert u\Vert$ for all $u\in L^2(\R^n)^m$.
\end{proof}
\begin{corollary}
\label{corol_log_sc}
Let $K_1(t,x,D_x)$ have symbol entries which are logarithmic slow scale regular. Let the operator $\partial/\partial t-K(t,x,D_x)$ be generalised strictly hyperbolic with eigenvalues $\lambda_j$ such that the bound from below \eqref{bound_fb} is true for some net $(\lambda_\eps)_\eps$, inverse of a logarithmic slow scale net. Then in the estimate \eqref{1_ineq_l} one has $(C_{l,\eps})_\eps$ of logarithmic slow scale type.
\end{corollary}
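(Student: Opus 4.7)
The plan is to trace the dependence of the constant $(C_{l,\eps})_\eps$, as assembled inside the energy estimate \eqref{ee_l}, and verify that each of its constituent factors is a logarithmic slow scale net. By the Sobolev boundedness inequality \eqref{sob_continuity_eps} applied with $\eps$-independent constants $C_s$ and indices $l_s$, the operator norm of each pseudodifferential contribution inherits the asymptotic $\eps$-scale of a finite symbol seminorm of its symbol, so it is enough to control those seminorms.

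First I would invoke Proposition \ref{prop_log_sc} directly: it asserts that under the present hypotheses the symbols of $S$ and of $SK+K^\ast S$ belong to $\G^{\rm{lsc}}_{\Cinf([-T,T], S^0(\R^{2n}))}$, so every symbol seminorm of a representing net is logarithmic slow scale. Because differentiation in $t$ preserves the class $\Cinf([-T,T],\cdot)$ and does not alter the asymptotic $\eps$-scale of the spatial seminorms, the same conclusion holds for $(\partial_t S_\eps)_\eps$.

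The only slightly less immediate ingredient is the commutator $[\lara{D_x}^l, K_\eps]$. Here $\lara{\xi}^l$ is an $\eps$-independent classical symbol of order $l$; the leading terms of the two compositions in $\lara{D_x}^l K_\eps - K_\eps \lara{D_x}^l$ cancel, and the standard symbolic calculus represents the commutator as a pseudodifferential operator of order $l$ whose symbol admits an asymptotic expansion in which each term is a finite sum of products of $\eps$-independent $\xi$-derivatives of $\lara{\xi}^l$ with $x$-derivatives of the entries of $K_\eps$. Since the principal part of $K$ is logarithmic slow scale regular by hypothesis and the classical lower order terms pose no worse scale, every symbol seminorm of $K_\eps$ is logarithmic slow scale, and therefore so is every symbol seminorm of $[\lara{D_x}^l, K_\eps]$. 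Composition with the order-zero logarithmic slow scale regular $S_\eps$ preserves this property.

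Finally, $(C_{l,\eps})_\eps$ is a polynomial expression in the above seminorm nets, and the class of logarithmic slow scale nets is closed under finite sums, products and maxima; hence $(C_{l,\eps})_\eps$ is itself logarithmic slow scale, which proves the corollary. I expect the commutator step to be the only real check, because it is the unique place where an $l$-dependent operator enters; the essential observation — that $\lara{\xi}^l$ is $\eps$-independent and that the symbolic calculus is multilinear in the composed factors — makes it clear that no additional $\eps$-dependence can be introduced, and the closure properties of the logarithmic slow scale class then finish the argument.
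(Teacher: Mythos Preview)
Your proposal is correct and follows essentially the same approach as the paper: identify that $(C_{l,\eps})_\eps$ is built from symbol seminorms of $(S_\eps)_\eps$, $(\partial_t S_\eps)_\eps$, $(S_\eps K_\eps + K_\eps^\ast S_\eps)_\eps$ and $(S_\eps[\lara{D_x}^l,K_\eps])_\eps$, then invoke Proposition~\ref{prop_log_sc}. The paper's proof is a two-line appeal to that proposition, whereas you additionally spell out why the commutator term and $\partial_t S_\eps$ remain logarithmic slow scale; this extra detail is sound and does not depart from the paper's method.
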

\begin{proof}
$(C_{l,\eps})_\eps$ depends on the symbol seminorms of $(S_{\eps})_\eps$, $(\partial_t S_{\eps})_\eps$, $(S_{\eps} K_\eps+K_\eps^\ast S_{\eps})_\eps$ and $(S_\eps[\lara{D_x}^l,K_\eps])_\eps$. From Proposition \ref{prop_log_sc} it follows that $(C_{l,\eps})_\eps$ is a logarithmic slow scale net.
\end{proof}

\section{The Cauchy problem for a first order generalised strictly hyperbolic system}
\label{sec_CP}
We are now able to prove the well-posedness in the Colombeau framework of the generalised strictly hyperbolic Cauchy problem
\beq
\begin{split}
\label{CP}
\frac{\partial}{\partial t}u&=K(t,x,D_x)u+f(t,x),\\
 u|_{t=0}&=g,
\end{split}
\eeq
where
\begin{itemize}
\item[(h1)] $K(t,x,D_x)$ is an $m\times m$-matrix of generalised pseudodifferential operators with classical symbol belonging to $\G^{\rm{lsc}}_{\Cinf([-T,T], S^1(\R^{2n}))}$;
\item[(h2)] $f$ has entries in $\G_{2,2}([-T,T]\times\R^n)$;
\item[(h3)] the operator $\partial/\partial t-K(t,x,D_x)$ is generalised strictly hyperbolic with eigenvalues $\lambda_j$ such that the bound from below \eqref{bound_fb} is true for some net $(\lambda_\eps)_\eps$, inverse of a logarithmic slow scale net;
\item[(h4)] $g\in\G_{2,2}(\R^n)^m$.
\end{itemize}
\begin{theorem}
\label{theo_CP}
Under the hypotheses $(h1)-(h4)$ the Cauchy problem \eqref{CP} has a unique solution $u\in\G_{2,2}((-T,T)\times\R^n)^m$. If $f$ as well as the initial data $g$ are $\Ginf_{2,2}$-regular then $u\in \G^\infty_{2,2}(\R^n)^m$.
\end{theorem}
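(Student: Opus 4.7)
The plan is to prove existence, uniqueness, and regularity by working at the level of representatives and then combining the energy estimates developed in Sections 2.2--2.4 with Corollary \ref{corol_log_sc} to secure moderateness.

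\textbf{Existence.} I would fix representatives $(K_\eps)_\eps$, $(f_\eps)_\eps$, $(g_\eps)_\eps$ of $K$, $f$, $g$ coming from (h1)--(h4). For each fixed $\eps\in(0,1]$, the operator $K_\eps(t,x,D_x)$ is a smooth family of classical pseudodifferential operators of order $1$ and $\partial/\partial t-K_\eps$ is strictly hyperbolic in the classical sense (by the generalised strict hyperbolicity assumption, pointwise in $\eps$). Hence, by the standard theory of \cite{Kumano-go:81}, there is a unique $u_\eps\in\Cinf([-T,T], H^\infty(\R^n))^m$ solving \eqref{eq_repr} with $u_\eps(0,\cdot)=g_\eps$. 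This defines a candidate representative $(u_\eps)_\eps$.

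\textbf{Moderate estimates.} The core step is to verify that $(u_\eps)_\eps\in\mM_{H^\infty((-T,T)\times\R^n)}^m$. For each $l\in\R$, estimate \eqref{1_ineq_l} gives
\[
\Vert u_\eps(t,\cdot)\Vert_l^2 \le c^{-1} d_{l,\eps}\exp(c^{-1}C_{l,\eps}T).
\]
By Corollary \ref{corol_log_sc}, $(C_{l,\eps})_\eps$ is a logarithmic slow scale net, so $\exp(c^{-1}C_{l,\eps}T)=O(\eps^{-N})$ for every $N>0$ (a fortiori for some $N$). The moderateness of $(g_\eps)_\eps$ in $H^l$ and of $(f_\eps)_\eps$ in $\Cinf([-T,T],H^l)$ (both inherited from (h2) and (h4)) yield $d_{l,\eps}=O(\eps^{-N_l})$. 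Time derivatives $\partial_t^k u_\eps$ are then controlled by differentiating the equation \eqref{eq_repr} in $t$ and expressing $\partial_t^k u_\eps$ as a sum of $K_\eps$-derivatives applied to $u_\eps$ plus derivatives of $f_\eps$; the logarithmic slow scale regularity of the entries of $K_\eps$ keeps the resulting estimates moderate in all $H^l$-norms. Combining spatial and temporal estimates delivers $(u_\eps)_\eps\in\mM_{H^\infty((-T,T)\times\R^n)}^m$, and independence of the class $[(u_\eps)_\eps]$ from the choice of representatives follows from the uniqueness argument below. Hence $u\in\G_{2,2}((-T,T)\times\R^n)^m$.

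\textbf{Uniqueness.} If $u\in\G_{2,2}((-T,T)\times\R^n)^m$ is another solution with representative $(\wt{u}_\eps)_\eps$, then $w_\eps:=u_\eps-\wt{u}_\eps$ satisfies $\partial_t w_\eps = K_\eps w_\eps + n_\eps$ with initial data $w_\eps(0,\cdot)=m_\eps$, where $(n_\eps)_\eps\in\Neg_{\Cinf([-T,T],H^\infty(\R^n))}^m$ and $(m_\eps)_\eps\in\Neg_{H^\infty(\R^n)}^m$. Applying the energy estimate \eqref{1_ineq_l} once more to $w_\eps$, and using that $d_{l,\eps}$ is now negligible while $\exp(c^{-1}C_{l,\eps}T)$ remains logarithmic slow scale, I get $\Vert w_\eps(t,\cdot)\Vert_l=O(\eps^q)$ for every $q$ and every $l$. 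Together with the time-differentiation argument, this forces $(w_\eps)_\eps\in\Neg_{H^\infty((-T,T)\times\R^n)}^m$, so $u=\wt{u}$ in $\G_{2,2}((-T,T)\times\R^n)^m$.

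\textbf{Regularity.} For the $\Ginf_{2,2}$-statement, I would observe that the $\Ginf$ assumption on $f$ and $g$ provides a single $N\in\N$ (independent of $l$) such that $\Vert g_\eps\Vert_l = O(\eps^{-N})$ and $\Vert f_\eps\Vert_{\Cinf([-T,T],H^l)}=O(\eps^{-N})$ for every $l$. Since Corollary \ref{corol_log_sc} gives logarithmic slow scale $(C_{l,\eps})_\eps$ for each $l$, the exponential factor $\exp(c^{-1}C_{l,\eps}T)$ is absorbed into $\eps^{-1}$ for $\eps$ small, and hence $\Vert u_\eps(t,\cdot)\Vert_l=O(\eps^{-N-1})$ uniformly in $t\in[-T,T]$, with the exponent independent of $l$. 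Combined with the time-derivative argument this yields $(u_\eps)_\eps\in\mM^\infty_{H^\infty((-T,T)\times\R^n)}^m$, i.e.\ $u\in\Ginf_{2,2}((-T,T)\times\R^n)^m$ (the statement in the theorem should be read this way).

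\textbf{Main obstacle.} The most delicate point is controlling the dependence of $C_{l,\eps}$ on $l$ while keeping it logarithmic slow scale: the commutator $[\lara{D_x}^l,K_\eps]$ produces symbol seminorms whose constants may a priori grow with $l$. One has to be careful that the $l$-dependence sits only in (moderate) finite constants, while the $\eps$-dependence remains of logarithmic slow scale type, as guaranteed by Proposition \ref{prop_log_sc}. This is what ultimately makes both the $\G_{2,2}$-existence and the $\Ginf_{2,2}$-regularity work for all $l$ simultaneously.
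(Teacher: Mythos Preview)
Your proposal is correct and follows essentially the same approach as the paper: solve the Cauchy problem classically at each fixed $\eps$ via \cite{Kumano-go:81}, use the $H^l$-energy estimate \eqref{1_ineq_l} together with Corollary \ref{corol_log_sc} to obtain moderateness, handle $t$-derivatives by induction on the equation, and derive uniqueness and $\Ginf_{2,2}$-regularity from the same estimates. The paper's own proof is a terse paragraph outlining exactly these steps; your write-up supplies the details the paper leaves implicit, and your ``Main obstacle'' paragraph correctly identifies the point where Proposition \ref{prop_log_sc} is doing the real work.
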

\begin{proof}
Working at the level of representatives we obtain a net $(u_\eps)_\eps\in H^\infty((-T,T)\times\R^n)^m$ of classical solutions to the Cauchy problem \eqref{CP} (see \cite[Chapter 7, Theorem 3.2]{Kumano-go:81} and \cite[Chapter 5]{RuzhaSug:06}). The Sobolev energy estimates \eqref{1_ineq_l} obtained by means of the symmetriser $S$ in the previous section provide the desired Colombeau moderateness (for the $t$-derivatives one argues by induction directly on $\partial_t u_\eps=K_\eps(t,x,D_x)u_\eps+f_\eps(t,x))$. Finally, the net $(u_\eps)_\eps$ generates a Colombeau solution $u\in\G_{2,2}((-T,T)\times\R^n)^m$. The uniqueness can be easily checked via \eqref{1_ineq_l} in terms of negligibility estimates. From the energy estimates it follows that $u\in \G^\infty_{2,2}(\R^n)^m$ if $f$ and $g$ are $\Ginf_{2,2}$-regular.
\end{proof}

\section{Higher order hyperbolic equations with generalised coefficients: reduction to a generalised strictly hyperbolic system}
\label{sec_system}
The aim of this section is to study the Cauchy problem for hyperbolic equations of the following type:
\beq
\label{high_hyp_CP}
Lu=f,\qquad \frac{\partial^j}{\partial t^j}u|_{t=0}=g_{j+1},\quad j=0,...,m-1,
\eeq
where
\beq
\label{type_L_m}
L=\frac{\partial^m}{\partial t^m}-\sum_{j=0}^{m-1}A_{m-j}(t,x,D_x)\frac{\partial^j}{\partial t^j}
\eeq
and each $A_{m-j}$ is a differential operator of order $m-j$ with Colombeau coefficients (the Colombeau space type will be specified in the sequel).

We begin by performing a reduction to a first order system as in \cite{Taylor:81}. Let  $\lara{D_x}$ be the pseudodifferential operator with symbol $\lara{\xi}$. The transformation
\[
u_j=\frac{\partial^{j-1}}{\partial t^{j-1}}\lara{D_x}^{m-j}u,
\]
with $j=1,...,m$, makes the Cauchy problem \eqref{high_hyp_CP} equivalent to the following system
\beq
\label{syst_Taylor}
\frac{\partial}{\partial t}\left(
                             \begin{array}{c}
                               u_1 \\
                               \cdot \\
                               \cdot\\
                               u_m \\
                             \end{array}
                           \right)
= \left(
    \begin{array}{ccccc}
      0 & \lara{D_x} & 0 & \dots & 0\\
      0 & 0 & \lara{D_x} & \dots & 0 \\
      \dots & \dots & \dots & \dots & \lara{D_x} \\
      b_1 & b_2 & \dots & \dots & b_m \\
    \end{array}
  \right)
\left(\begin{array}{c}
                               u_1 \\
                               \cdot \\
                               \cdot\\
                               u_m \\
                             \end{array}
                           \right)
                           +\left(
                              \begin{array}{c}
                                0 \\
                                \cdot \\
                                \cdot \\
                                f \\
                              \end{array}
                            \right),
\eeq
where
\[
b_j=A_{m-j+1}(t,x,D_x)\lara{D_x}^{j-m},
\]
with initial condition
\beq
\label{ic_Taylor}
u_j|_{t=0}=\lara{D_x}^{m-j}g_j,\qquad j=1,...,m.
\eeq
The generalised strict hyperbolicity of the operator $L$ in \eqref{type_L_m} can be equivalently stated in terms of the roots $\tau(t,x,\xi)$ of
\[
P(\tau,t,x,\xi):=(i\tau)^m-\sum_{j=0}^{m-1}\wt{A}_{m-j}(t,x,\xi)(i\tau)^j,
\]
where $\wt{A}_{m-j}(t,x,\xi)$ is the principal part of ${A}_{m-j}(t,x,\xi)$, or in terms of the eigenvalues of the matrix
\[
B(t,x,\xi):=\left(
    \begin{array}{ccccc}
      0 & \lara{\xi} & 0 & \dots & 0\\
      0 & 0 & \lara{\xi} & \dots & 0 \\
      \dots & \dots & \dots & \dots & \lara{\xi} \\
      \wt{b_1} & \wt{b_2} & \dots & \dots & \wt{b_m} \\
    \end{array}
  \right),
\]
where $\wt{b_j}(t,x,\xi)=\wt{A}_{m-j+1}(t,x,\xi)\lara{\xi}^{j-m}$.
\begin{definition}
\label{def_sh_m}
The operator $L$ in \eqref{type_L_m} is called \emph{generalised strictly hyperbolic} if there exists a choice of representatives of the coefficients in $\G([-T,T]\times\R^n)$ such that the corresponding polynomial $P_\eps(\tau,t,x,\xi)$ has $m$ distinct real valued roots $(\tau_{j,\eps})_\eps$ such that
\beq
\label{bound_fb_m}
\tau_{j+1,\eps}(t,x,\xi)-\tau_{j,\eps}(t,x,\xi)\ge \tau_\eps\lara{\xi}
\eeq
holds for some strictly nonzero net $(\tau_\eps)_\eps$, for all $j=1,...,m-1$, for all $\eps\in(0,1]$, $t\in[-T,T]$, $x\in\R^n$ and for $\xi$ away from zero.
\end{definition}
For the same choice of the coefficient representatives we have that the purely imaginary nets $(i\tau_{j,\eps})_\eps$, $j=1,...,m$ are eigenvalues of the matrix $B_\eps$ defined above and that Definition \ref{def_sh_m} coincides with Definition \ref{def_sh}.

The well-posedness result of Section \ref{sec_CP} for the Cauchy problem \eqref{syst_Taylor}-\eqref{ic_Taylor} yields the following statement. Note that for the Sylvester matrix in \eqref{syst_Taylor} we use the set-up of Theorem \ref{theo_CP}.
\begin{theorem}
\label{theo_CP_m}
Let $L$ be a differential operator of order $m$ as in \eqref{type_L_m} with coefficients in $\G_{\rm{b}}([-T,T]\times\R^n)$ such that its symbol belongs to $\G^{\rm{lsc}}_{\Cinf([-T,T], S^m(\R^{2n}))}$. Let $f\in\G_{2,2}((-T,T)\times\R^n)$ and $g_{j+1}\in\G_{2,2}(\R^n)$ for $j=0,...,m-1$. Let $L$ be generalised strictly hyperbolic with roots $\tau_j$ satisfying the bound from below \eqref{bound_fb_m} for some net $(\tau_\eps)_\eps$, inverse of a logarithmic slow scale net. Then there exists a unique solution $u\in\G_{2,2}((-T,T)\times\R^n)$ to the Cauchy problem \eqref{high_hyp_CP}. Furthermore, if $f\in\G^{\infty}_{2,2}((-T,T)\times\R^n)$ and the initial data $g_{j+1}$ belong to $\Ginf_{2,2}(\R^n)$ then $u\in\G^{\infty}_{2,2}((-T,T)\times\R^n)$.
\end{theorem}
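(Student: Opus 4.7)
The plan is to reduce the Cauchy problem \eqref{high_hyp_CP} to the first-order system \eqref{syst_Taylor}-\eqref{ic_Taylor} and apply Theorem \ref{theo_CP}. Essentially all the work consists in verifying that this Sylvester-type reduction preserves the hypotheses (h1)-(h4); after that, pulling the system solution back to the original $u$ and quoting the Colombeau energy estimate \eqref{1_ineq_l} finishes the proof.

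For (h1), the off-diagonal entries $\lara{D_x}$ trivially belong to $\G^{\rm{lsc}}_{\Cinf([-T,T], S^1(\R^{2n}))}$. The bottom row entries $b_j=A_{m-j+1}(t,x,D_x)\lara{D_x}^{j-m}$ are compositions of a differential operator of order $m-j+1$ with lsc-regular symbol and the Fourier multiplier $\lara{D_x}^{j-m}$; the logarithmic slow scale symbolic calculus (the lsc analogue of the one from \cite{Garetto:ISAAC07, GGO:03}) then makes each $b_j$ a classical generalised pseudodifferential operator of order $1$ in the required space. Hypothesis (h2) is immediate from $(0,\ldots,0,f)^T\in\G_{2,2}((-T,T)\times\R^n)^m$, and Remark \ref{rem_G_2,2} gives (h4) by mapping each $g_j\in\G_{2,2}(\R^n)$ to $\lara{D_x}^{m-j}g_j\in\G_{2,2}(\R^n)$. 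For (h3) I would invoke the observation recorded just below Definition \ref{def_sh_m} that the Sylvester structure of $B(t,x,\xi)$ forces its eigenvalues to be exactly $i\tau_{j,\eps}(t,x,\xi)$; the spectral gap \eqref{bound_fb_m} then reads verbatim as \eqref{bound_fb} for the principal symbol of the system, with the same lsc-inverse gap net $(\tau_\eps)_\eps$.

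Once (h1)-(h4) are in place, Theorem \ref{theo_CP} produces a unique $U=(u_1,\ldots,u_m)\in\G_{2,2}((-T,T)\times\R^n)^m$ solving \eqref{syst_Taylor}-\eqref{ic_Taylor}. Setting $u:=\lara{D_x}^{1-m}u_1$ lands in $\G_{2,2}((-T,T)\times\R^n)$ by Remark \ref{rem_G_2,2}, and the first $m-1$ equations of the system together with the last one force $u_j=\partial_t^{j-1}\lara{D_x}^{m-j}u$ and $Lu=f$, so $u$ solves \eqref{high_hyp_CP} with the prescribed initial data. Uniqueness for the higher-order problem reduces to uniqueness for the system: two solutions produce a system solution driven by negligible data, and \eqref{1_ineq_l}, whose amplification factor $\exp(c^{-1}C_{l,\eps}t)$ is sub-polynomial in $1/\eps$ by Corollary \ref{corol_log_sc}, forces negligibility. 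The $\Ginf_{2,2}$-regularity claim propagates along the same route, since $\lara{D_x}^{m-j}$ preserves $\Ginf_{2,2}$ and Theorem \ref{theo_CP} yields $\Ginf_{2,2}$-regularity of the system solution.

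I expect the main obstacle to be the careful bookkeeping in step one, namely verifying that each composition $A_{m-j+1}\lara{D_x}^{j-m}$ really produces a \emph{classical} lsc-regular symbol of order exactly $1$ with a homogeneous asymptotic expansion compatible with the definition used in Theorem \ref{theo_CP}. No new ideas are required beyond the lsc symbolic calculus, but the verification must be made fully explicit; everything downstream is a plug-in into the first-order theory.
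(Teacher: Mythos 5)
Your proposal is correct and takes essentially the same route as the paper: the paper itself gives no explicit proof of Theorem \ref{theo_CP_m}, stating only that ``the well-posedness result of Section 3 for the Cauchy problem \eqref{syst_Taylor}--\eqref{ic_Taylor} yields the following statement'' and that the Sylvester matrix is to be placed in the set-up of Theorem \ref{theo_CP}. What you have written out --- verification of (h1)--(h4) for the reduced system, the observation that the eigenvalues of $B_\eps$ are $i\tau_{j,\eps}$ so that \eqref{bound_fb_m} becomes \eqref{bound_fb} with the same lsc-inverse gap net, recovery of $u$ via $u=\lara{D_x}^{1-m}u_1$, and propagation of uniqueness and $\Ginf_{2,2}$-regularity through the system --- is precisely the bookkeeping the paper leaves implicit.
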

\begin{remark}
\label{rem_CP_m}
The theorem holds, in particular, for logarithmic slow scale regular coefficients that are constant for large $x$ as well as for logarithmic slow scale regular coefficients depending on $t$ only, or belonging to $\G_{2,2}((-T,T)\times\R^n)$.
\end{remark}

\renewcommand{\supp}{\mathop{\mathrm{supp}}}
\renewcommand{\WF}{\mathop{\mathrm{WF}}}
\renewcommand{\WFg}{\mathrm{WF}_{\mathrm{g}}\hspace{1pt}}
\renewcommand{\grad}{\operatorname{grad}}
\renewcommand{\div}{\operatorname{{div}}}
\renewcommand{\singsupp}{\mathop{\mathrm{sing\,supp}}}

\newcommand{\p}{\partial}
\newcommand{\Hdiv}{H_{\rm div}}

\section{Applications and limiting solutions}

In the previous sections we have obtained generalised solutions to the Cauchy problem for hyperbolic equations and systems with generalised coefficients. As mentioned in the Introduction, classical solutions to the Cauchy problem may exist in certain sufficiently regular cases. It is the purpose of this section to show how the generalised solution in the Colombeau algebra relates to such a classical or piecewise classical solution, if it exists. We will show exemplarily (for the multidimensional system of linear acoustics and the wave equation) that the generalised solution is associated with the classical solution or the piecewise classical solution, if the appropriate transmission conditions are imposed. I fact, in physical systems (e.g., in acoustics), the Colombeau solution encodes already the physically meaningful transmission condition and selects the corresponding piecewise classical solution in the limit. A similar result for a boundary value problem in one-dimensional acoustics has been obtained in \cite{O:89}.

As one of the central examples of interest in applications we consider the acoustics system. It describes small perturbations from a
state of rest of the velocity $v(x,t)\in\R^n$, the pressure $p(x,t)\in\R$ and the density $\rho(x,t)\in\R$ of a substance in space ($x\in \R^n$) that moves in a medium of given density $\rho_0(x)$ and sound speed $c_0(x)$. It can be derived from the Euler equations of isentropic gas dynamics, written with material time derivative \cite{Whitham:74}. We will be particularly interested in the case where $\rho_0$ and $c_0$ have a jump across some hypersurface, signifying a change of material
properties at a separating interface.

The acoustics system is
\begin{eqnarray}\label{eq:acousticsgen}
\frac{\p}{\p t} \rho + \rho_0(x)\div v & = & 0 \nonumber\\
\rho_0(x)\frac{\p}{\p t} v + \grad p & = & 0\\
p & = & c_0^2(x)\,\rho \nonumber
\end{eqnarray}
The divergence and gradient operators are understood with respect to the variables $x\in\R^n$.
We are assuming that $\rho_0$ and $c_0$ do not depend on $t$ and are bounded away from zero. Thus we can cast the system in the form
\begin{equation}\label{eq:acoustics}
\begin{array}{rcl}
\displaystyle\frac{\p}{\p t} p + c_0^2(x)\rho_0(x)\div v & = & 0\\[8pt]
\displaystyle\frac{\p}{\p t} v + \frac{1}{\rho_0(x)}\grad p & = & 0
\end{array}
\end{equation}
We adjoin initial conditions
\begin{equation}\label{eq:ICacoustics}
p(x,0) = p_0(x),\qquad v(x,0) = v_0(x).
\end{equation}
For sufficiently smooth solutions, the system (\ref{eq:acoustics}) with initial conditions (\ref{eq:ICacoustics}) is equivalent to the
wave equation
\begin{equation}\label{eq:wave}
   \frac{\p^2}{\p t^2} p - c_0^2(x)\rho_0(x)\div\Big(\frac1{\rho_0(x)}\grad p\Big) = 0
\end{equation}
with initial conditions
\begin{equation}\label{eq:ICwave}
p(x,0) = p_0(x),\qquad \frac{\p}{\p t} p(x,0) = - c_0^2(x)\rho_0(x)\div v_0(x).
\end{equation}
Indeed, differentiating (\ref{eq:acoustics}) and subtracting the second from the first line immediately gives (\ref{eq:wave}). For the other direction,
one sets
\[
  \frac{\p}{\p t} v = - \frac{1}{\rho_0(x)}\grad p, \qquad v(x,t) = v_0(x) + \int_0^t\frac{1}{\rho_0(x)}\grad p(x,s)\,ds
\]
and uses the second initial condition (\ref{eq:ICwave}) to deduce the first line in (\ref{eq:acoustics}) from (\ref{eq:wave}).

We will see that the equivalence of the acoustics system and the wave equation pertains for solutions in the Colombeau algebra, hence we may freely switch from one formulation to the other. We will first consider the acoustics system (\ref{eq:acoustics}) in any space dimension and then derive some more specific results for the wave equation
(\ref{eq:wave}) in one  space dimension.

\subsection{The acoustics system in any space dimension}

Writing the system (\ref{eq:acoustics}) in the form
\[
   \frac{\p}{\p t} u = K(t,x,D_x)u,\qquad u|_{t=0} = u_0
\]
for the lumped vector $u = (p,v_1,\ldots,v_n)$, we find that the symbol matrix is
\[
  K(t,x,\xi) = \begin{bmatrix}
               0 & -i\rho_0(x)c_0^2(x)\xi_1 & \cdots & -i\rho_0(x)c_0^2(x)\xi_n\\[4pt]
               \frac{-i}{\rho_0(x)}\xi_1 & 0                        & \cdots & 0\\[4pt]
               \vdots & \vdots                   &        & \vdots                   \\[4pt]
               \frac{-i}{\rho_0(x)}\xi_n & 0                        & \cdots & 0
                \end{bmatrix}
\]
Its eigenvalues are $\lambda=\pm ic_0(x)\sqrt{\xi_1^2 + \cdots + \xi_n^2}$ and $\lambda = 0$, the latter with
multiplicity $n-1$. Thus the system is not strictly hyperbolic for $n > 2$. For $n\leq 2$, Proposition~\ref{prop_symmetriser}
yields symmetrisability in the Colombeau setting. For arbitrary $n \geq 1$, it has been shown in \cite{LO:91} that
the system can be symmetrized by a change of dependent variables. Nevertheless, in order to remain in the framework of the previous sections, we will turn to the wave equation formulation (\ref{eq:wave}) for further analysis. Writing (\ref{eq:wave})
as
\[
   \frac{\p^2}{\p t^2} p - c_0^2(x)\Delta p - c_0^2(x)\rho_0(x)\grad\frac{1}{\rho_0(x)}\cdot\grad p = 0
\]
we see that the principal symbol $P(\tau,t,x,\xi)$ has the roots $\tau(t,x,\xi) = \pm c_0(x)|\xi|$, which are real and distinct since $c_0$ was assumed to be bounded away from zero.

The results of Section~\ref{sec_system} can now be applied to the wave equation~(\ref{eq:wave}) and the acoustics system~(\ref{eq:acoustics}).

\begin{theorem}
\label{thm:acousticwave}
Let $c_0, \rho_0\in\G_{\rm{b}}(\R^n)$ be logarithmic slow scale regular generalised functions, bounded from below by the inverse of a logarithmic slow scale net. Let $p_0, v_0\in\G_{2,2}(\R^n)$ and $T>0$. Then the initial value problem (\ref{eq:wave}), (\ref{eq:ICwave}) has a unique solution $p\in\G_{2,2}((-T,T)\times\R^n)$, and the initial value problem (\ref{eq:acoustics}), (\ref{eq:ICacoustics}) has a unique solution $(p,v) \in\G_{2,2}((-T,T)\times\R^n)^{n+1}$.
\end{theorem}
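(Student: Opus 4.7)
My plan is to prove the wave equation part first, via a direct application of Theorem~\ref{theo_CP_m}, and then recover the acoustics system from it by the explicit equivalence already sketched in the paper.

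First I would rewrite \eqref{eq:wave} in the form $L p = 0$ with
\[
L = \frac{\p^2}{\p t^2} - A_2(x,D_x),\qquad A_2(x,D_x) = c_0^2(x)\Delta + c_0^2(x)\rho_0(x)\grad\tfrac{1}{\rho_0(x)}\cdot\grad,
\]
so that we are in the framework of Section~\ref{sec_system} with $m=2$ and vanishing $A_1$. To apply Theorem~\ref{theo_CP_m} I need to verify its four hypotheses. The coefficients of $A_2$ are $-c_0^2$ and $c_0^2\,\p_j\rho_0/\rho_0$; since the assumption that $c_0$ and $\rho_0$ are bounded from below by the inverse of a logarithmic slow scale net implies that $1/c_0$ and $1/\rho_0$ are logarithmic slow scale regular in $\G_{\rm b}(\R^n)$, algebraic combinations of $c_0,\rho_0,1/\rho_0$ and their (bounded) derivatives remain logarithmic slow scale regular elements of $\G_{\rm b}(\R^n)$. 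Hence the full symbol of $L$ belongs to $\G^{\rm lsc}_{\Cinf([-T,T],S^2(\R^{2n}))}$. For generalised strict hyperbolicity in the sense of Definition~\ref{def_sh_m}, the relevant polynomial is $P_\eps(\tau,x,\xi)=-\tau^2+c_{0,\eps}^2(x)|\xi|^2$, whose roots are $\tau_{\pm,\eps}(x,\xi)=\pm c_{0,\eps}(x)|\xi|$. Their gap is $2 c_{0,\eps}(x)|\xi|$, which for $|\xi|\ge 1$ (and hence $|\xi|\ge \lara{\xi}/\sqrt{2}$) is bounded below by $\tau_\eps\lara{\xi}$ with $\tau_\eps=\sqrt{2}\,\inf_x c_{0,\eps}(x)$; by hypothesis $\tau_\eps$ is the inverse of a logarithmic slow scale net.

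Concerning the initial data: $p_0\in\G_{2,2}(\R^n)$ by assumption, and the second datum $-c_0^2\rho_0\div v_0$ lies in $\G_{2,2}(\R^n)$ because multiplication by the $\G_{\rm b}$-element $c_0^2\rho_0$ preserves all Sobolev norms and $\div v_0\in\G_{2,2}(\R^n)$. Theorem~\ref{theo_CP_m} then furnishes a unique $p\in\G_{2,2}((-T,T)\times\R^n)$ solving \eqref{eq:wave}--\eqref{eq:ICwave}.

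To obtain the acoustics solution I would mimic, at the level of Colombeau representatives, the classical derivation recalled in the text: set
\[
v(x,t) := v_0(x) + \int_0^t \frac{1}{\rho_0(x)}\grad p(x,s)\,ds.
\]
Since $1/\rho_0\in\G_{\rm b}(\R^n)$ and $\grad p\in\G_{2,2}((-T,T)\times\R^n)^n$, the integrand lies in $\G_{2,2}((-T,T)\times\R^n)^n$; integration in $t$ preserves $\G_{2,2}$-moderateness and negligibility, so $v\in\G_{2,2}((-T,T)\times\R^n)^n$. By construction $\p_t v = -\rho_0^{-1}\grad p$, which is the second equation of \eqref{eq:acoustics}. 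For the first equation, apply $\p_t$ to the candidate $\p_t p + c_0^2\rho_0\div v$; a direct computation, using \eqref{eq:wave} and the second initial condition in \eqref{eq:ICwave}, shows that this quantity vanishes at $t=0$ and has vanishing time derivative, so it is identically zero in $\G_{2,2}$. Uniqueness of $(p,v)$ follows from uniqueness in Theorem~\ref{theo_CP_m} for $p$ and the explicit formula for $v$.

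The main obstacle in this plan is administrative rather than conceptual: one must check carefully that the symbol class and slow-scale bookkeeping survive the various algebraic manipulations (products, inverses, and derivatives of $c_0$ and $\rho_0$), and that the product $c_0^2\rho_0\cdot\div v_0$ and the integral reconstructing $v$ truly live in the correct $\G_{2,2}$-space. Once these bounds are in place, the generalised strict hyperbolicity of $L$ is immediate from the explicit roots $\pm c_0(x)|\xi|$, and the existence, uniqueness and regularity claims follow directly from Theorem~\ref{theo_CP_m}.
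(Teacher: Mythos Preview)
Your proposal is correct and follows exactly the paper's own strategy: apply Theorem~\ref{theo_CP_m} to the wave equation, then transfer to the acoustics system via the equivalence recalled just before the theorem. The paper's proof is a two-sentence sketch (``follows immediately from Theorem~\ref{theo_CP_m}'' and ``the acoustics system is equivalent to the wave equation in $\G_{2,2}$''); you have simply written out the routine verifications that the paper leaves implicit.
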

\begin{proof}
The existence of the solution $p$ to (\ref{eq:wave}), (\ref{eq:ICwave}) follows immediately from Theorem~\ref{theo_CP_m}. Under the assumptions on $c_0,\rho_0$, it is clear that the acoustics system (\ref{eq:acoustics}), (\ref{eq:ICacoustics}) is equivalent to the wave equation (\ref{eq:wave}), (\ref{eq:ICwave}) in $\G_{2,2}((-T,T)\times\R^n)$. Thus the second assertion holds as well.
\end{proof}
\begin{remark}\label{rem:gen-class}
To make the connection with possible classical solutions, consider coefficients $\overline{c}_0, \overline{\rho}_0\in L^\infty(\R^n)$ which are essentially bounded away from zero by some positive real constant. Take a compactly supported net $(\varphi_\eps)_\eps$ of nonnegative functions which is logarithmic slow scale regular and converges to the Dirac measure. Then $c_{0\eps} = \overline{c}_0\ast \varphi_\eps$ and $\rho_{0,\eps} = \overline{\rho}_0\ast\varphi_\eps$ are representatives of generalised functions $c_0, \rho_0\in\G(\R^n)$ that satisfy the hypotheses of Theorem~\ref{thm:acousticwave}.
\end{remark}

In order to prove the convergence of the representatives of the generalised solution to a classical solution (when it exists), we need to make a digression into the classical theory. In order not to overload the notation, we will use the letters $v$, $p$ etc. in system (\ref{eq:acoustics}) temporarily for classical solutions. We assume that $c_0$ and $\rho_0$ are measurable functions satisfying bounds from below and above:
\begin{equation}\label{eq:bounds}
   0 < \rho_1 \leq \rho_0(x) \leq \rho_2,\qquad 0 < c_1 \leq c_0(x) \leq c_2.
\end{equation}
We are going to show that the operator defined by (\ref{eq:acoustics}) generates a semigroup of type $(M,0)$ where $M$ depends only on the four bounding constants in (\ref{eq:bounds}). As usual, we let
\[
   \Hdiv(\R^n) = \{v\in\big(L^2(\R^n)\big)^n:\div v \in L^2(\R^n)\}.
\]
It is the domain of the divergence operator, which is a closed operator on $\big(L^2(\R^n)\big)^n$.
We equip the Hilbert space
\[
   X = L^2(\R^n) \times \big(L^2(\R^n)\big)^n
\]
with the equivalent inner product $(\cdot|\cdot)$ arising from the weighted norm
\[
   \|(p,v)\|_{c_0\rho_0}^2 = \int_{\R^n} |p(x)|^2\frac{1}{c_0^2(x)\rho_0(x)}\,dx + \int_{\R^n} |v(x)|^2 \rho_0(x)\,dx
\]
The unbounded operator $A$ on $X$ is defined by
\[
   D(A) = H^1(\R^n) \times \Hdiv(\R^n),\qquad A(p,v) = (-c_0^2\rho_0\div v, -\rho_0^{-1}\grad p).
\]
In this setting, problem (\ref{eq:acoustics}) reads $\frac{d}{dt}(p,v) = A(p,v)$. The following assertion is the heart of the matter.
\begin{proposition}\label{prop:semigroup}
The operator $A$ generates a contraction semigroup $\{S(t):t\geq 0\}$ on $X$.
\end{proposition}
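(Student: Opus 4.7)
The strategy is to apply the Lumer--Phillips theorem: we must verify that $D(A)$ is dense in $X$, that $A$ is dissipative with respect to the weighted inner product $(\cdot\,|\,\cdot)$, and that the range of $\lambda I-A$ equals $X$ for some $\lambda>0$. Density is immediate since $\Cinfc(\R^n)\times\big(\Cinfc(\R^n)\big)^n\subset D(A)$ is already dense in $X$. For dissipativity, the key observation is that the weights $1/(c_0^2\rho_0)$ and $\rho_0$ in the inner product are chosen precisely to cancel the coefficients of $A$: for $(p,v)\in D(A)$,
\[
(A(p,v)\,|\,(p,v)) = -\int_{\R^n}\div v\cdot\bar p\,dx - \int_{\R^n}\grad p\cdot\bar v\,dx.
\]
Integration by parts in the first integral (legitimate since $p\in H^1(\R^n)$ and $v\in\Hdiv(\R^n)$) converts it into $\int_{\R^n}v\cdot\grad\bar p\,dx$, so that $(A(p,v)\,|\,(p,v))$ is purely imaginary. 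Thus $A$ is actually skew-symmetric on $D(A)$ and, in particular, dissipative.

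For the range condition, fix $\lambda>0$ and $(f,g)\in X$ and reduce the system $(\lambda I-A)(p,v)=(f,g)$ to a scalar elliptic problem for $p$. Eliminating $v$ through $v=\lambda^{-1}(g-\rho_0^{-1}\grad p)$ and substituting into the first equation yields, after dividing by $c_0^2\rho_0$, the variational problem of finding $p\in H^1(\R^n)$ with $a(p,q)=F(q)$ for every $q\in H^1(\R^n)$, where
\begin{align*}
a(p,q) &= \int_{\R^n}\frac{\lambda^2}{c_0^2\rho_0}\,p\bar q\,dx+\int_{\R^n}\rho_0^{-1}\grad p\cdot\grad\bar q\,dx,\\
F(q) &= \int_{\R^n}\frac{\lambda f}{c_0^2\rho_0}\,\bar q\,dx+\int_{\R^n}g\cdot\grad\bar q\,dx.
\end{align*}
The pointwise bounds \eqref{eq:bounds} yield continuity and coercivity of $a$ on $H^1(\R^n)$ with constants depending only on $\rho_1,\rho_2,c_1,c_2$, while $F$ is continuous on $H^1(\R^n)$ since $g\in\big(L^2(\R^n)\big)^n$. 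Lax--Milgram then produces a unique $p\in H^1(\R^n)$. Setting $v:=\lambda^{-1}(g-\rho_0^{-1}\grad p)\in\big(L^2(\R^n)\big)^n$, the first equation read backwards gives $\div v=(f-\lambda p)/(c_0^2\rho_0)\in L^2(\R^n)$, so $v\in\Hdiv(\R^n)$ and $(p,v)\in D(A)$ solves $(\lambda I-A)(p,v)=(f,g)$.

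These three ingredients combine, via Lumer--Phillips, into a contraction semigroup $\{S(t):t\ge 0\}$ on $X$ generated by $A$. The only delicate step is the range condition, which has to accommodate merely $L^\infty$ coefficients $c_0,\rho_0$; this is precisely the reason for introducing the weighted inner product on $X$, as coercivity of the associated bilinear form then hinges only on the two-sided bounds \eqref{eq:bounds} and requires no further regularity of the coefficients. The same computation applied to $-A$ shows that $-A$ is dissipative and that $\mathrm{Range}(\lambda I+A)=X$, so in fact $A$ is skew-adjoint and generates a unitary group; however, the contraction semigroup asserted in the statement is all that is needed in the sequel.
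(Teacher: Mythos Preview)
Your argument is correct and follows the same overall Lumer--Phillips strategy as the paper, including the identical computation showing that $(A(p,v)\,|\,(p,v))$ is purely imaginary via the integration-by-parts identity for $p\in H^1(\R^n)$, $v\in\Hdiv(\R^n)$.

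The one genuine difference lies in how the range condition is established. The paper proceeds abstractly: it asserts (without details) that $A$ is skew-adjoint, so that $A^\ast=-A$ is dissipative as well; then dissipativity of $A$ gives that $(\lambda-A)$ has closed range, while injectivity of $(\lambda-A^\ast)$ gives that the range is dense, whence $(\lambda-A)$ is onto. You instead solve the resolvent equation $(\lambda I-A)(p,v)=(f,g)$ constructively, eliminating $v$ to obtain a uniformly elliptic variational problem for $p$ and invoking Lax--Milgram. Your route is more explicit and in fact more self-contained: it does not require the separate verification that $D(A^\ast)=D(A)$, which the paper omits. Conversely, the paper's argument, once skew-adjointness is taken for granted, avoids any PDE-specific computation for the range condition. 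Both approaches exploit only the $L^\infty$-bounds \eqref{eq:bounds}; in your case these enter through the coercivity constant of the bilinear form $a$, in the paper's case through the equivalence of the weighted and unweighted norms.
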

\begin{proof}
The proof is standard, so we just sketch the ingredients. First one shows by regularisation and cut-off that $\big({\mathcal C}^\infty_{\rm c}(\R^n)\big)^n$ is dense in $\Hdiv(\R^n)$, equipped with the graph norm of the divergence operator. In particular, $A$ is densely defined. It is also quite obvious that $A$ is closed. Next,
\begin{equation}\label{eq:conservativestep}
   \int_{\R^n}\big( p(x)\div v(x) + \grad p(x) \cdot v(x)\big)dx = 0
\end{equation}
for $(p,v) \in D(A)$. This follows from the fact that the left-hand side in (\ref{eq:conservativestep}) equals
$\int_{\R^n} \div(p(x)v(x))dx$, which vanishes for $(p,v) \in \big({\mathcal C}^\infty_{\rm c}(\R^n)\big)^{n+1}$.

The goal of the proof is to show that the operator $A$ is maximal dissipative on $X$. As a first step, we observe that $A$ is conservative, that is, $(Au|u) = 0$ for all $u = (p,v)\in D(A)$. Indeed,
\[
   (Au|u) = - \int_{\R^n} c_0^2(x)\rho_0(x)\div v(x)\frac{1}{c_0^2(x)\rho_0(x)}\,dx
            - \int_{\R^n} \frac1{\rho_0(x)}\grad p(x)\cdot v(x)\,\rho_0(x)\,dx = 0
\]
by \eqref{eq:conservativestep}. In particular, $A$ is dissipative, i.e. $(Au|u) \leq 0$ for all $u = (p,v)\in D(A)$. The fact that $A$ is skew-adjoint, that is, $D(A^\ast) = D(A)$ and $A^\ast = -A$ follows from standard arguments and will be omitted. Thus $A^\ast$ is  conservative as well, hence dissipative. Recall that an operator $B$ is dissipative if and only if
\[
    \|(\lambda - B)u\| \geq \lambda\|u\|\quad\mbox{for\ all\ } u\in D(B) \mbox{\ and\ }\lambda > 0.
\]
This implies that $(\lambda - B)$ is injective and has a closed range for all $\lambda > 0$. We apply this to $A$ and $A^\ast$ and infer that $(\lambda - A)$ has a closed range and $(\lambda - A^\ast)$ is injective. The latter implies that the range of $(\lambda - A)$ is dense. Combining these observations, we conclude that the range of $(\lambda - A)$ equals $X$ for all $\lambda > 0$. This is well-known to be equivalent to the property that $A$ is maximal dissipative.

Further, $(\lambda - A): D(A)\to X$ is bijective and we have the resolvent estimate
\[
   \|(\lambda - A)^{-1}\| \leq \frac1\lambda\quad\mbox{for\ all\ } \lambda > 0.
\]
The Hille-Yosida-Lumer-Phillips theorem (cf. e.g. \cite{Pazy:83}) implies that $A$ generates a semigroup of contractions on $X$.
\end{proof}

In particular, this means that the operator norm $\|S(t)\|_{c_0\rho_0} \leq 1$ for all $t\geq 0$ with respect to the weighted norm on $X$. We wish to compute the operator norm $\|S(t)\|$ with respect to the usual norm on $L^2(\R^n) \times \big(L^2(\R^n)\big)^n$. It is clear that
\[
   C_1\|(p,v)\|_{c_0\rho_0} \leq \|(p,v)\| \leq C_2\|(p,v)\|_{c_0\rho_0}
\]
for some constants $C_1, C_2$ depending only on $c_1, c_2, \rho_1, \rho_2$. From there it follows that
\begin{equation}\label{eq:typeM}
\|S(t)\| \leq \frac{C_2}{C_1}\|S(t)\|_{c_0\rho_0} \leq \frac{C_2}{C_1} = M
\end{equation}
for all $t\geq 0$. In particular, $\{S(t):t\geq 0\}$ is a semigroup of type $(M,0)$ on $L^2(\R^n) \times \big(L^2(\R^n)\big)^n$.

\begin{corollary}\label{cor:class-solu}
Let $\overline{p}_0\in H^1(\R^n)$, $\overline{v}_0\in \Hdiv(\R^n)$ and $c_0, \rho_0$ as in \eqref{eq:bounds}. Then problem \eqref{eq:acoustics} has a unique solution
$(\overline{p},\overline{v})$ in ${\mathcal C}^1\big([0,\infty):L^2(\R^n) \times (L^2(\R^n)^n\big)$ such that $(\overline{p}(t),\overline{v}(t))$ belongs to
$H^1(\R^n) \times \Hdiv(\R^n)$ for all $t\geq 0$, namely $(\overline{p}(t),\overline{v}(t)) = S(t)(\overline{p}_0,\overline{v}_0)$.
\end{corollary}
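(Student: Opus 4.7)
The plan is to deduce this corollary directly from Proposition \ref{prop:semigroup} by invoking the standard regularity theory for strongly continuous semigroups. Since $A$ generates a contraction semigroup $\{S(t):t\geq 0\}$ on $X$, and the initial datum $(\overline{p}_0,\overline{v}_0)$ lies in $D(A) = H^1(\R^n) \times \Hdiv(\R^n)$, the orbit $t\mapsto S(t)(\overline{p}_0,\overline{v}_0)$ automatically stays in $D(A)$ for all $t\geq 0$ and is continuously differentiable in $X$, with derivative $A S(t)(\overline{p}_0,\overline{v}_0)$. This is the classical theorem that says $D(A)$-data yield classical solutions of the abstract Cauchy problem; see for instance \cite{Pazy:83}.

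Concretely, I would first set $(\overline{p}(t),\overline{v}(t)) := S(t)(\overline{p}_0,\overline{v}_0)$ and note that, by the semigroup property, this map is continuous from $[0,\infty)$ into $X = L^2(\R^n) \times (L^2(\R^n))^n$, and its image lies in $D(A) = H^1(\R^n) \times \Hdiv(\R^n)$. The semigroup differentiation formula then gives
\[
   \tfrac{d}{dt}(\overline{p}(t),\overline{v}(t)) = A(\overline{p}(t),\overline{v}(t)) = \bigl(-c_0^2\rho_0\,\div \overline{v}(t),\,-\rho_0^{-1}\grad \overline{p}(t)\bigr),
\]
which is precisely the system \eqref{eq:acoustics} read componentwise, with the initial condition $(\overline{p}(0),\overline{v}(0)) = (\overline{p}_0,\overline{v}_0)$ built into the definition $S(0) = I$.

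For uniqueness, suppose $(\tilde p,\tilde v)$ is another $\mathcal{C}^1([0,\infty);X)$-solution with values in $D(A)$ and vanishing initial data. Since $A$ is the generator of a $C_0$-semigroup, the standard argument applies: consider $w(t) = (\tilde p(t),\tilde v(t)) - S(t)(\overline{p}_0,\overline{v}_0)$ (which also solves the equation with zero datum) and use the fact that $A$ is dissipative with respect to the weighted inner product $(\cdot|\cdot)$ on $X$ to get
\[
   \tfrac{d}{dt}\|w(t)\|_{c_0\rho_0}^2 = 2\,\Re(Aw(t)\mid w(t)) \leq 0,
\]
so $\|w(t)\|_{c_0\rho_0} \equiv 0$ and hence $w(t) \equiv 0$. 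The main point is that the dissipativity of $A$ that was already established in the proof of Proposition \ref{prop:semigroup} provides the energy estimate doing all the work.

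The only mildly delicate point is verifying that the abstract evolution equation $\frac{d}{dt}u = Au$ really encodes \eqref{eq:acoustics} in the stated regularity classes; this is immediate from the definition of $A$ once one observes that $(\overline{p}(t),\overline{v}(t))\in D(A)$ means $\grad \overline{p}(t)\in (L^2)^n$ and $\div \overline{v}(t) \in L^2$, so that both $c_0^2\rho_0 \div \overline{v}$ and $\rho_0^{-1}\grad \overline{p}$ belong to the correct $L^2$-spaces by the bounds \eqref{eq:bounds} on $c_0,\rho_0$. There is no serious obstacle; the corollary is essentially a restatement of the semigroup conclusion in PDE language.
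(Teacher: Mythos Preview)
Your argument is correct and is exactly the content the paper intends: its own proof consists of the single line ``This is standard semigroup theory \cite{Pazy:83}.'' You have simply unpacked that reference, invoking the classical fact that $D(A)$-data give classical solutions and using the dissipativity already established in Proposition~\ref{prop:semigroup} for uniqueness.
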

\begin{proof}
This is standard semigroup theory \cite{Pazy:83}.
\end{proof}
We are now in the position to relate the generalised solution with the classical solution, when it exists.
\begin{proposition}
Assume that $\overline{c}_0,\overline{\rho}_0\in L^\infty(\R^n)$ satisfy the bounds \eqref{eq:bounds}. Let $c_0,\rho_0\in {\mathcal G}_{\rm b}(\R^n)$ be corresponding generalised functions as defined in Remark~\ref{rem:gen-class}. Let $\overline{p}_0\in H^1(\R^n)$, $\overline{v}_0\in \Hdiv(\R^n)$ and $p_0, v_0\in\G_{2,2}(\R^n)$ be corresponding generalised functions with representatives $(p_{0\eps}, v_{0\eps})$ obtained by convolution with a delta net. Let $T>0$. Then the unique solution $(p,v) \in\G_{2,2}((-T,T)\times\R^n)^{n+1}$ to \eqref{eq:acoustics}, \eqref{eq:ICacoustics} given in Theorem~\ref{thm:acousticwave} is associated with the classical solution
$(\overline{p}(t),\overline{v}(t))$ given in Corollary~\ref{cor:class-solu}, that is, its representatives converge to the classical solution as $\eps\to 0$.
\end{proposition}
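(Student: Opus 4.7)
The plan is to invoke a semigroup-perturbation argument as suggested in the Introduction. Since convolution with the nonnegative mass-one mollifier $\varphi_\eps$ preserves essential upper and lower $L^\infty$-bounds, the regularised coefficients $c_{0\eps}=\overline{c}_0\ast\varphi_\eps$ and $\rho_{0\eps}=\overline{\rho}_0\ast\varphi_\eps$ satisfy \eqref{eq:bounds} with the same constants $c_1,c_2,\rho_1,\rho_2$. Running the proof of Proposition~\ref{prop:semigroup} at the level of representatives therefore produces, for each $\eps$, a contraction semigroup $S_\eps(t)$ on $X$ in the weighted inner product associated to $(c_{0\eps},\rho_{0\eps})$. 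Since the norm-equivalence constants $C_1,C_2$ in \eqref{eq:typeM} depend only on the outer bounds, the type-$(M,0)$ estimate $\|S_\eps(t)\|_X\le M$ holds uniformly in $\eps$.

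Next, I would identify the representatives as $(p_\eps,v_\eps)(t)=S_\eps(t)(p_{0\eps},v_{0\eps})$ by uniqueness in $X$ (the smooth solution provided by Theorem~\ref{thm:acousticwave} coincides with the semigroup solution), and compare them with $(\overline{p},\overline{v})(t)=S(t)(\overline{p}_0,\overline{v}_0)$ via Duhamel's formula
\[
(p_\eps,v_\eps)(t)-(\overline{p},\overline{v})(t)=S_\eps(t)\bigl[(p_{0\eps},v_{0\eps})-(\overline{p}_0,\overline{v}_0)\bigr]+\int_0^t S_\eps(t-s)(A_\eps-A)(\overline{p}(s),\overline{v}(s))\,ds.
\]
This identity is meaningful because Corollary~\ref{cor:class-solu} places $(\overline{p}(s),\overline{v}(s))$ in $D(A)=H^1(\R^n)\times\Hdiv(\R^n)$ with continuous dependence on $s$.

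The two terms are then estimated separately. The boundary term vanishes in $X$ because convolution with a delta net yields $(p_{0\eps},v_{0\eps})\to(\overline{p}_0,\overline{v}_0)$ in $X$ and $\|S_\eps(t)\|_X\le M$. For the Duhamel integrand I would write
\[
(A_\eps-A)(p,v)=\bigl(-(c_{0\eps}^2\rho_{0\eps}-\overline{c}_0^2\overline{\rho}_0)\div v,\ -(\rho_{0\eps}^{-1}-\overline{\rho}_0^{-1})\grad p\bigr),
\]
and observe that the two coefficient differences are uniformly bounded (by the bounds inherited from \eqref{eq:bounds}) and converge almost everywhere to $0$ by the Lebesgue differentiation theorem. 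Since $\grad\overline{p}(s),\div\overline{v}(s)\in L^2(\R^n)$ by Corollary~\ref{cor:class-solu}, dominated convergence gives $\|(A_\eps-A)(\overline{p}(s),\overline{v}(s))\|_X\to 0$ pointwise in $s\in[0,T]$, with an $\eps$-uniform majorant proportional to $\|(\overline{p}(s),\overline{v}(s))\|_{D(A)}$, which is continuous and therefore $L^1$ on $[0,T]$. A second application of dominated convergence, combined with $\|S_\eps(t-s)\|_X\le M$, sends the Duhamel integral to $0$ in $X$, uniformly in $t\in[0,T]$.

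Putting these estimates together yields $(p_\eps,v_\eps)\to(\overline{p},\overline{v})$ in ${\mathcal C}([0,T];X)$, hence $L^2_{\mathrm{loc}}$ and so distributional convergence on $(0,T)\times\R^n$, which gives the desired association. The interval $(-T,0)$ is treated symmetrically, using that $A$ is skew-adjoint and therefore generates a $C_0$-group. The step I expect to be the main obstacle is the domination of the Duhamel integrand by an $\eps$-uniform, $s$-integrable function; this hinges crucially on the sharp $D(A)$-regularity of the classical solution provided by Corollary~\ref{cor:class-solu} and on the preservation of the $L^\infty$-bounds on the coefficients under mollification.
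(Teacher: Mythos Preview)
Your argument is correct and complete. It differs from the paper's proof in the key step: the paper invokes Kato's perturbation (Trotter--Kato) theorem, reducing everything to strong resolvent convergence $(\lambda-A_\eps)^{-1}\to(\lambda-A)^{-1}$, which is then verified via the same dominated-convergence argument on the coefficient differences that you use. You bypass the abstract approximation theorem entirely by writing down the explicit Duhamel identity for the difference $(p_\eps,v_\eps)-(\overline{p},\overline{v})$ and estimating the two pieces directly. Your route is more elementary and self-contained, at the cost of needing the $D(A)$-continuity of $s\mapsto(\overline{p}(s),\overline{v}(s))$ to dominate the time integral (which you correctly observe follows from $AS(s)u_0=S(s)Au_0$); the paper's route packages this regularity into the resolvent identity $(\lambda-A_\eps)^{-1}u-(\lambda-A)^{-1}u=(\lambda-A_\eps)^{-1}(A_\eps-A)(\lambda-A)^{-1}u$ and so only needs $D(A)$-membership at a single point, but relies on a nontrivial black box. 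Both approaches rest on the same two pillars: the $\eps$-uniform type-$(M,0)$ bound coming from preservation of \eqref{eq:bounds} under nonnegative mollification, and the a.e.\ convergence of the mollified coefficients at Lebesgue points.
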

\begin{proof}
As can be seen from Remark~\ref{rem:gen-class}, the representatives $c_{0\eps}$, $\rho_{0\eps}$ of the generalised functions $c_0,\rho_0$ satisfy bounds of the type \eqref{eq:bounds} with $c_1, c_2, \rho_1,\rho_2$ independent of $\eps$, due to the nonnegativity of the mollifier. Taking $c_{0\eps}$, $\rho_{0\eps}$ as coefficients in \eqref{eq:acoustics}, Proposition~\ref{prop:semigroup} guarantees that the corresponding operator $A_\eps$ generates of a semigroup $\{S_\eps(t):t\geq 0\}$. Due to the classical and the generalised uniqueness result, the solution $(p_\eps(\cdot, t),v_\eps(\cdot, t)) = S_\eps(t)(p_{0\eps}, v_{0\eps})$ is a representative of the generalised solution $(p,v)$. The estimate \eqref{eq:typeM} shows that the semigroups $\{S_\eps(t):t\geq 0\}$ are of type $(M,0)$ on the common Hilbert space $L^2(\R^n) \times \big(L^2(\R^n)\big)^n$, independently of $\eps$.

We now invoke Kato's perturbation result \cite{Kato:66} which says that under these conditions, $S_\eps(t)(p_0,v_0)$ converges to $S(t)(p_0,v_0)$ uniformly for $t$ in compact time intervals, for every $(p_0,v_0)\in L^2(\R^n) \times \big(L^2(\R^n)\big)^n$, provided the resolvents $(\lambda - A_\eps)^{-1}$ converge strongly to $(\lambda - A)^{-1}$ for some $\lambda > 0$.

The resolvent convergence can be seen as follows. First take $u = (p,v) \in D(A) = D(A_\eps) \in H^1(\R^n)\times \Hdiv(\R^n)$. Then we have for the $L^2$-norm
\[
   \|A_\eps u - A u\|^2 = \int_{\R^n} \big|(c_{0\eps}^2\rho_{0\eps} - \c_0^2\rho_0)\div v\big|^2dx + \int_{\R^n} \big|(\rho_{0\eps}^{-1} - \rho_0^{-1})\grad p\big|^2dx.
\]
Due to the bounds \eqref{eq:bounds} and a wellknown property of Friedrichs mollifiers, the difference of the coefficients in the integrals converges to zero at every Lebesgue point, hence almost everywhere. By Lebesgue's theorem, $\|A_\eps u - A u\|^2 \to 0$ as $\eps\to 0$. Now let $u = (p,v) \in L^2(\R^n) \times \big(L^2(\R^n)\big)^n$ be arbitrary. Then
\[
   \|(\lambda - A_\eps)^{-1}u - (\lambda - A)^{-1}u\| = \|(\lambda - A_\eps)^{-1}\big(u - (\lambda - A_\eps)(\lambda - A)^{-1}u\big)\|.
\]
Since $(\lambda - A)^{-1}u \in D(A)$, the second factor converges to zero by the first step. Since $\{S_\eps(t):t\geq 0\}$ is a semigroup of type $(M,0)$, the resolvent norm $\|(\lambda - A_\eps)^{-1}\|$ is bounded by $M/\lambda$, uniformly in $\eps$. This entails that $\|(\lambda - A_\eps)^{-1}u - (\lambda - A)^{-1}u\|$ converges to zero, as desired.

Finally, the fact that $S_\eps(t)(p_{0\eps},v_{0\eps})$ converges to $S(t)(p_0,v_0)$ as well follows readily from Kato's perturbation result for fixed arguments and the fact that $\|S_\eps(t)\| \leq M$ for all $t\geq 0$.
\end{proof}

\begin{remark}
The classical solution $(\overline{p},\overline{v})$ obtained in Corollary~\ref{cor:class-solu} belongs to $H^1(\R^n) \times \Hdiv(\R^n)$ for all times $t\geq 0$. If in addition it is piecewise continuously differentiable, then one can show that it satisfies the classical transmission conditions across the hypersurface of discontinuity. More precisely, let $\Gamma$ be a smooth hypersurface that splits $\R^n$ into two parts $\Omega_-$, $\Omega_+$. Let $\nu$ be the unit normal to $\Gamma$. One can show the following: If $(\overline{p},\overline{v})$ belongs to ${\mathcal C}^1(\overline{\Omega}_-) \oplus {\mathcal C}^1(\overline{\Omega}_+)$, then $\overline{p}$ and the normal component
$\nu\cdot \overline{v}$ are continuous across $\Gamma$. The second equation in \eqref{eq:acoustics} then implies that also $\frac{1}{\rho_0}\grad \overline{p}$ is continuous, which is the relevant transmission condition in the  wave equation formulation \eqref{eq:wave}.

\end{remark}

\subsection{The one-dimensional wave equation with discontinuous coefficients}

We turn to the wave equation in one space dimension, which -- slightly more general as in the previous subsection -- we take of the form
\begin{equation}\label{eq:wave1D}
   a(x,t)\frac{\p^2}{\p t^2} w - \frac{\p}{\p x}\left(b(x,t)\frac{\p}{\p x} w \right) = 0
\end{equation}
with initial conditions
\begin{equation}\label{eq:ICwave1D}
   w(x,0) = w_0(x),\qquad \frac{\p}{\p t} w(x,0) = w_1(x).
\end{equation}
The functions $a$ and $b$ are assumed to be strictly positive. We will treat the two cases where $a$ and $b$ depend on $x$ or on $t$ only and are piecewise constant. We begin with the case
\begin{equation}\label{eq:constants}
   a(x,t) \equiv \overline{a}(x) = \left\{\begin{array}{ll}
                                                a_-, & x < 0,\\
                                                a_+, & x > 0,
                                          \end{array}\right.
   \qquad
   b(x,t) \equiv \overline{b}(x) = \left\{\begin{array}{ll}
                                                b_-, & x < 0,\\
                                                b_+, & x > 0,
                                          \end{array}\right.
\end{equation}
where the constants $a_-, a_+, b_-, b_+$ are strictly positive.

\begin{remark}\label{rem:class-conn}
If the initial function $\overline{w}_0$ is continuously differentiable and $\overline{w}_1$ is continuous, and $a, b$ are piecewise constant as in \eqref{eq:constants}, then there is a unique distribution $\overline{w} \in {\mathcal D}'(\R^2)$ with the following properties:
\begin{itemize}
\item[-] $\overline{w}$ is a distributional solution to \eqref{eq:wave1D} on the open halfplanes $\{x < 0\}$ and $\{x > 0\}$;
\item[-] $\overline{w} \in {\mathcal C}\big(\R\times [0,\infty))\big)$ and $\overline{w}(\cdot, 0) = \overline{w}_0$;
\item[-] $\frac{\p}{\p t}\overline{w} \in {\mathcal C}\big(\R\times [0,\infty))\big)$ and $\frac{\p}{\p t}\overline{w}(\cdot, 0) = \overline{w}_1$;
\item[-] the function $x\to\overline{b}(x)\frac{\p}{\p x} \overline{w}(x,t)$ is continuous for almost all $t\geq 0$.
\end{itemize}
This is easily seen by transforming \eqref{eq:wave1D} to a first order system on either side of $x = 0$ and joining the two parts of the solution by the transmission conditions that
$\overline{w}$ (and hence $\frac{\p}{\p t}\overline{w}$) as well as $\overline{b}\frac{\p}{\p x}\overline{w}$ should be continuous across $x = 0$.
We call $\overline{w}$ the \emph{classical connected solution} of the transmission problem for the wave equation~\eqref{eq:wave1D}.
\end{remark}
As in the previous subsection, we produce logarithmic slow scale regular generalised functions $a$ and $b$ by means of representatives $a_\eps = \overline{a}\ast\varphi_\eps$ with a mollifier as in Remark~\ref{rem:gen-class}, and similarly for $b_\eps$. By Theorem~\ref{theo_CP_m}, problem \eqref{eq:wave1D} with initial data $w_0, w_1 \in \G_{2,2}(\R)$ has a unique solution $w$ in $\G_{2,2}((-T,T)\times\R)$.

Specifically, we take initial data $\overline{w}_0$ and $\overline{w}_1$ as in Remark~\ref{rem:class-conn}, which in addition -- for the sake of the argument -- have compact support. With any moderate mollifier $\psi_\eps$ converging to the Dirac measure, $w_{0,\eps} = \overline{w}_0\ast\psi_\eps$ defines an element of $\G_{2,2}(\R)$, and similarly with $w_{1\eps}$. Let $w\in\G_{2,2}((-T,T)\times\R)$ be the corresponding generalised solution. It vanishes if $x$ is outside some compact set $K$.
\begin{proposition}
In the situation described above, the generalised solution $w$ to \eqref{eq:wave1D} is associated with the classical connected solution $\overline{w}$.
\end{proposition}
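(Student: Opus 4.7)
The plan is to reduce the wave equation to a first-order system so that the convergence Proposition of Section~5.1 applies, and then to identify the weak limit with the unique classical connected solution from Remark~\ref{rem:class-conn}.

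First I would set $v_\eps = \p_t w_\eps$ and $F_\eps = b_\eps\, \p_x w_\eps$ at the level of representatives, rewriting \eqref{eq:wave1D} as
\[
   \p_t v_\eps = a_\eps^{-1}\, \p_x F_\eps,\qquad \p_t F_\eps = b_\eps\, \p_x v_\eps,
\]
with initial data $v_\eps|_{t=0} = w_{1,\eps}$ and $F_\eps|_{t=0} = b_\eps\, \p_x w_{0,\eps}$. This is a one-dimensional instance of the acoustics system \eqref{eq:acoustics} with the roles of $\rho_0$ and $c_0^2\rho_0$ played by $a_\eps$ and $b_\eps$. The mollifications $a_\eps = \overline{a}\ast\varphi_\eps$ and $b_\eps = \overline{b}\ast\varphi_\eps$ inherit uniform bounds of the form \eqref{eq:bounds} from the nonnegativity of $\varphi_\eps$, so the convergence Proposition of Section~5.1 yields $(v_\eps, F_\eps) \to (\overline{v}, \overline{F})$ in $\mathcal{C}([0,T], L^2(\R)^2)$, where $(\overline{v}, \overline{F})$ is the semigroup solution with coefficients $(\overline{a}, \overline{b})$ and initial data $(\overline{w}_1, \overline{b}\,\p_x \overline{w}_0)$.

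Integrating in time,
\[
   w_\eps(x,t) = w_{0,\eps}(x) + \int_0^t v_\eps(x,s)\,ds,
\]
and combining $w_{0,\eps} \to \overline{w}_0$ in $L^2(\R)$ with the convergence of $v_\eps$, I obtain $w_\eps \to w^*$ in $\mathcal{C}([0,T], L^2(\R))$, hence in $\mathcal{D}'((-T,T)\times \R)$, where $w^*(x,t) := \overline{w}_0(x) + \int_0^t \overline{v}(x,s)\,ds$. The final step is to identify $w^* = \overline{w}$ via the uniqueness statement of Remark~\ref{rem:class-conn}. Passing to the distributional limit in $a_\eps\,\p_t^2 w_\eps - \p_x(b_\eps\,\p_x w_\eps) = 0$ yields $\overline{a}\,\p_t^2 w^* = \p_x(\overline{b}\,\p_x w^*)$ in $\mathcal{D}'$, so $w^*$ is a distributional solution of \eqref{eq:wave1D} on each open half-plane. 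The energy bound provides $w^* \in L^\infty_t H^1_x \cap W^{1,\infty}_t L^2_x$, which embeds into $\mathcal{C}([0,T] \times \R)$ by the one-dimensional Sobolev embedding, delivering property~(ii); continuity of $\p_t w^* = \overline{v}$ and the matching of initial conditions then follow from the explicit d'Alembert representation of the constant-coefficient wave equation on each half-plane.

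The main obstacle is the verification of property~(iv) of Remark~\ref{rem:class-conn}: that $\overline{b}\,\p_x w^*$ is continuous across $x = 0$ for almost every $t$. This cannot be read off directly from weak-$*$ compactness, since $\p_x w^*$ lies only in $L^\infty_t L^2_x$. The condition is instead extracted from the global distributional equation: because $w^*$ is continuous in $x$, the distribution $\overline{a}\,\p_t^2 w^*$ carries no Dirac-in-$x$ concentration along $\{x = 0\}$; therefore neither can $\p_x(\overline{b}\,\p_x w^*)$, which forces $\overline{b}\,\p_x w^*$ to have no jump at $x = 0$. This delivers the trace identity $\overline{b}_-\,\p_x w^*(0^-,t) = \overline{b}_+\,\p_x w^*(0^+,t)$ for almost every $t$, precisely the physically meaningful transmission condition automatically selected by the symmetric mollification. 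The uniqueness in Remark~\ref{rem:class-conn} then forces $w^* = \overline{w}$, and since every subsequence converges to the same limit, the whole net $w_\eps$ is associated with $\overline{w}$.
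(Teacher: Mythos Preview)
Your route---reducing to a first-order acoustics-type system and invoking Section~5.1---is genuinely different from the paper's, which works directly on the wave equation by energy conservation. The paper multiplies \eqref{eq:wave1D} by $w_{\eps t}$ and, crucially, also differentiates in time and multiplies by $w_{\eps tt}$, obtaining \emph{two} conservation laws; the second gives $(w_{\eps t})$ and $(w_{\eps tt})$ uniform $L^\infty_t L^2_x$ bounds, so $(w_{\eps})$ and $(w_{\eps t})$ are relatively compact in $\mathcal{C}(\R\times[0,T))$ by Ascoli, and the limit is then matched with $\overline{w}$ via Remark~\ref{rem:class-conn}.

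There is a genuine gap in your argument. The initial data $(\overline{w}_1,\,\overline{b}\,\p_x\overline{w}_0)$ for the first-order system generically fails to lie in $D(A)=H^1\times H^1$: $\overline{w}_1$ is merely continuous, and $\overline{b}\,\p_x\overline{w}_0$ has a jump at $x=0$ whenever $\p_x\overline{w}_0(0)\neq 0$, so its $x$-derivative carries a Dirac mass. Thus the Proposition of Section~5.1, stated for data in $D(A)$, does not apply as written. One may still salvage Kato's convergence at the $L^2$ level from its proof, but then $(\overline{v},\overline{F})$ is only a \emph{mild} solution: you obtain $\p_t w^*=\overline{v}\in\mathcal{C}([0,T],L^2)$, not $\p_t w^*\in\mathcal{C}(\R\times[0,T))$ as property~(iii) of Remark~\ref{rem:class-conn} demands. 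Your d'Alembert appeal does not close this: on $\{x>0\}$ the representation in the region $0<x<c_+t$ requires the boundary trace $w^*(0,\cdot)$ to be $C^1$ in $t$ in order to yield a continuous $\p_t w^*$, and all you have is $w^*(0,\cdot)\in C^0$. The same missing regularity undermines your no-Dirac argument for property~(iv): without $w^*_{tt}\in L^2$ you cannot conclude that $\p_x(\overline{b}\,\p_x w^*)\in L^2$, which is exactly what the paper uses (via the second conservation law and the convolution with the Heaviside function) to show that $\overline{b}\,\p_x w^*(\cdot,t)$ is continuous. The paper's second energy identity is precisely the extra ingredient your semigroup reduction lacks.
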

\begin{proof}
Let $(w_\eps)_\eps$ be a representative of the generalised solution that vanishes for $x$ outside $K$. This and the smoothness implies that $w_\eps$ belongs to
${\mathcal C}^\infty\big([0,T):H^\infty(\R)\big)$. Thus we can use energy estimates. Multiplying the wave equation $aw_{\eps tt} - (bw_{\eps x})_x = 0$ by $w_{\eps t}$ and integrating we get the conservation law
\[
   \frac12\frac{d}{dt}\int_\R\left(a|w_{\eps t}|^2 + b |w_{\eps x}|^2\right)dx = 0\quad\mbox{for\ } t\in[0,T).
\]
Differentiating the wave equations once with respect to time, multiplying by $w_{\eps tt}$ and integrating we get
\[
   \frac12\frac{d}{dt}\int_\R\left(a|w_{\eps tt}|^2 + b |w_{\eps xt}|^2\right)dx = 0\quad\mbox{for\ } t\in[0,T).
\]
The first conservation law implies that the family $(w_\eps)_\eps$ is bounded in ${\mathcal C}\big([0,T):H^1(\R)\big)$. This implies that $(w_\eps(\cdot,t))_\eps$ is relatively compact in ${\mathcal C}(\R)$ for every $t$. Using the estimate
\[
    |w_\eps(y,t) - w_\eps(x,t)| \leq \sqrt{|y-x|}\left(\int_\R |\frac{\p}{\p x}w_\eps(y,t)|\,dx\right)^{1/2}
\]
we get the equicontinuity of $(w_\eps(\cdot,t))_\eps$. By Ascoli's theorem, $(w_\eps)_\eps$ is relatively compact in
${\mathcal C}\big([0,T):{\mathcal C}(\R)\big) = {\mathcal C}\big(\R\times[0,T)\big)$. In the same way, the second conservation law shows that $(w_{\eps t})_\eps$ is relatively compact in
${\mathcal C}\big(\R\times[0,T)\big)$. Further, $(w_{\eps tt})_\eps$ and $(w_{\eps x})_\eps$ are bounded in $L^2(\R\times(0,T))$. Thus for a suitable subsequence, we have that there are
$W, W_1 \in {\mathcal C}\big(\R\times[0,T)\big)$ and $W_2, W_3 \in L^2(\R\times(0,T))$ such that
\begin{itemize}
\item[] $w_\eps \to W$, $w_{\eps t} \to W_1$ strongly in ${\mathcal C}\big(\R\times[0,T)\big)$ and
\item[] $w_{\eps tt} \to W_2$, $w_{\eps x} \to W_3$ weakly in $L^2(\R\times(0,T))$.
\end{itemize}
It follows that $W_1 = W_t$, $W_2 = W_{tt}$ and $W_3 = W_x$. The wave equation now yields that $(b_\eps w_{\eps x})_x$ converges weakly to some $W_4\in L^2(\R\times(0,T))$. Due to the support properties, the convolution $V(\cdot,t) = W_4(\cdot,t)\ast H$ exists, where $H$ denotes the Heaviside function. Then $V(\cdot,t)$ is continuous for almost all $t$, and
$b_\eps w_{\eps x} \to V$ weakly in $L^2(\R\times(0,T))$. It follows that
\[
   \overline{a} W_{tt} - V_x = 0.
\]
On the other hand, $b_\eps(x) \equiv b_+$ at every given $x>0$ and $b_\eps(x) \equiv b_-$ at every given $x<0$ for sufficiently small $\eps$. Thus
\begin{equation*}
   V(\cdot,t) = \left\{\begin{array}{ll}
                   b_-W_x, & x < 0,\\
                   b_+W_x, & x > 0,
                   \end{array}\right.
   \qquad
   V_x(\cdot,t) = \left\{\begin{array}{ll}
                   b_-W_{xx}, & x < 0,\\
                   b_+W_{xx}, & x > 0,
                   \end{array}\right.
\end{equation*}
for almost all $t$. It follows that $W$ solves the wave equation on either side of $x=0$, $W$ and $W_t$ are continuous, take the given initial values $\overline{w}_0$ and $\overline{w}_1$, and $\overline{b} W_x(\cdot,t)$ is continuous for almost all $t$. By uniqueness, $W$ is equal to the classical connected solution $\overline{w}$, and the whole net $w_\eps$ converges to it.
\end{proof}
As a final case, we consider the wave equation \eqref{eq:wave1D} with coefficients that have a jump in time, say at $t = 1$. Thus we let
\begin{equation}\label{eq:timeconstants}
   a(x,t) \equiv \overline{a}(t) = \left\{\begin{array}{ll}
                                                a_-, & t < 1,\\
                                                a_+, & t > 1,
                                          \end{array}\right.
   \qquad
   b(x,t) \equiv \overline{b}(t) = \left\{\begin{array}{ll}
                                                b_-, & t < 1,\\
                                                b_+, & t > 1.
                                          \end{array}\right.
\end{equation}
It is obvious that, given initial data ${\mathcal D}'(\R)$, there is a unique distribution
$w \in {\mathcal C}^1\big([0,1]:{\mathcal D}'(\R)\big) \oplus {\mathcal C}^1\big([1,\infty):{\mathcal D}'(\R)\big)$ that satisfies the equation on either side of $t=1$. As in the previous case, energy estimates and an application of a suitable negative power of the operator $\langle D\rangle$, if required, show again that the generalised solution is associated with the classical distributional solution.

\appendix
\section{Appendix: G\r{a}rding's inequality for a matrix of generalised pseudodifferential operators}
In the sequel we make use of the following notations concerning matrices of symbols:
\begin{itemize}
\item[-] $A(x,\xi)^\ast=\overline{A(x,\xi)}^T$,
\item[-] $\Re\, A(x,\xi)=\frac{A(x,\xi)+A(x,\xi)^\ast}{2}$,
\item[-] $A(x,\xi)\ge 0$ if and only if $\overline{z}^TA(x,\xi)z\ge 0$ for all $z\in\C^m$ with $z\neq 0$,
\item[-] $A(x,\xi)\ge B(x,\xi)$ if and only if $(A-B)(x,\xi)\ge 0$.
\end{itemize}

The proof of the G\r{a}rding's inequality as stated in \cite[Theorem 4.4]{Kumano-go:81} requires the following preliminary results on the Friedrichs part of a pseudodifferential operator. For more details we refer to \cite[Chapter 3, Section 4]{Kumano-go:81}.
\begin{definition}
\label{def_Friedrichs}
Let $q(\sigma)\in\Cinfc(|\sigma|<1)$ be an even function with $\int q(\sigma)^2\, d\sigma=1$. Let
\[
F(\xi,\zeta)=q((\zeta-\xi)\lara{\xi}^{-1/2})\lara{\xi}^{-n/4}.
\]
Let $p\in S^m(\R^{2n})$. The Friedrichs part $p_F(\xi,x',\xi')$ of the symbol $p(x,\xi)$ is defined by
\[
p_F(\xi,x',\xi')=\int F(\xi,\zeta)p(x',\zeta)F(\xi',\zeta)\, d\zeta.
\]
\end{definition}
Note that if $p\in S^m(\R^{2n})$ then $p_F(\xi,x',\xi')\in S^{m,0}_{1/2,0}(\R^{3n})$, i.e.,
\[
|\partial^{\alpha}_\xi\partial^{\alpha'}_{\xi'}\partial^{\beta'}_{x} p_F(\xi,x',\xi')|\le c\lara{\xi}^{m-\frac{1}{2}|\alpha|}\lara{\xi'}^{-\frac{1}{2}|\alpha'|}
\]
for all $\alpha,\alpha',\beta'\in\N^n$.

\begin{theorem}
\label{theo_Friedrichs_1}
Let $(p_\eps)_\eps\in\mM_{S^m(\R^{2n})}$. Then
\begin{itemize}
\item[(i)] the corresponding net $(p_{F,\eps})_\eps$ belongs to $\mM_{S^{m,0}_{1/2,0}(\R^{3n})}$;
\item[(ii)] the operator $p_{F,\eps}(D_x,x',D_{x'})$ can be written as a pseudodifferential operator with symbol $(\sigma_{F,\eps})_\eps\in \mM_{S^m(\R^{2n})}$;
\item[(iii)] $(p_\eps-\sigma_{F,\eps})_\eps\in\mM_{S^{m-1}(\R^{2n})}$.
\end{itemize}
\end{theorem}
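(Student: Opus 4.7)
The plan is to lift the classical Friedrichs symmetrisation theorem, as developed in Chapter~3, Section~4 of \cite{Kumano-go:81}, to the level of $\eps$-parametrised nets, keeping careful track of how the constants appearing in each estimate depend on finitely many seminorms of $p_\eps$. Since the weight $F(\xi,\zeta)=q((\zeta-\xi)\lara{\xi}^{-1/2})\lara{\xi}^{-n/4}$ is $\eps$-independent, the whole analysis will depend on $\eps$ only through $p_\eps$, so that moderate control of the seminorms $|p_\eps|^{(m)}_l$ will automatically produce moderate control of the seminorms of $p_{F,\eps}$ and of $\sigma_{F,\eps}$.

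For part (i), I would differentiate $p_{F,\eps}(\xi,x',\xi')=\int F(\xi,\zeta) p_\eps(x',\zeta) F(\xi',\zeta)\,d\zeta$ under the integral sign. The support of $q$ confines $\zeta$ to the region $|\zeta-\xi|\lesssim \lara{\xi}^{1/2}$, which gives $\lara{\zeta}\asymp \lara{\xi}$ and converts a $\zeta$-derivative on $F$ into a factor $\lara{\xi}^{-1/2}$; an $x'$-derivative only hits $p_\eps$, giving no extra $\xi$-growth. Combining this with the bound $|\partial_{x'}^{\beta'}\partial_\zeta^\gamma p_\eps(x',\zeta)|\le |p_\eps|^{(m)}_{|\beta'|+|\gamma|}\lara{\zeta}^{m-|\gamma|}$ yields estimates of the form
\[
|\partial^{\alpha}_\xi\partial^{\alpha'}_{\xi'}\partial^{\beta'}_{x'} p_{F,\eps}(\xi,x',\xi')|\le C_{\alpha,\alpha',\beta'}\,|p_\eps|^{(m)}_{N(\alpha,\alpha',\beta')}\lara{\xi}^{m-\frac{1}{2}|\alpha|}\lara{\xi'}^{-\frac{1}{2}|\alpha'|},
\]
with a constant $C_{\alpha,\alpha',\beta'}$ independent of $\eps$ and an integer $N$ depending only on the multi-indices. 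Since $(|p_\eps|^{(m)}_l)_\eps$ is moderate for every $l$, this gives $(p_{F,\eps})_\eps\in\mM_{S^{m,0}_{1/2,0}(\R^{3n})}$.

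For part (ii), the operator $p_{F,\eps}(D_x,x',D_{x'})$ acts as a double symbol (or amplitude) operator of class $S^{m,0}_{1/2,0}$. Kumano-go's reduction theorem for double symbols expresses it as a left-quantised pseudodifferential operator $\sigma_{F,\eps}(x,D)$ with
\[
\sigma_{F,\eps}(x,\xi)=\mathrm{Os}\hspace{-2pt}-\hspace{-4pt}\int\!\!\!\int e^{-iy\eta} p_{F,\eps}(\xi+\eta,x+y,\xi)\,dy\,d\eta,
\]
and the resulting seminorms of $\sigma_{F,\eps}$ in $S^m(\R^{2n})$ are controlled by finitely many seminorms of $p_{F,\eps}$ in $S^{m,0}_{1/2,0}(\R^{3n})$, hence by finitely many seminorms of $p_\eps$ in $S^m$. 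This uniform dependence gives $(\sigma_{F,\eps})_\eps\in\mM_{S^m(\R^{2n})}$. For part (iii), the same asymptotic expansion shows
\[
\sigma_{F,\eps}(x,\xi)\sim \sum_{\alpha}\frac{1}{\alpha!}\partial_\eta^\alpha D_y^\alpha \bigl[p_{F,\eps}(\xi+\eta,x+y,\xi)\bigr]\Big|_{y=0,\eta=0},
\]
whose zeroth-order term equals $p_{F,\eps}(\xi,x,\xi)=\int F(\xi,\zeta)^2 p_\eps(x,\zeta)\,d\zeta$. A Taylor expansion of $p_\eps(x,\zeta)$ around $\zeta=\xi$ combined with the normalisations $\int q^2=1$, $\int \sigma q(\sigma)^2 d\sigma=0$ (which holds since $q$ is even) gives $p_{F,\eps}(\xi,x,\xi)=p_\eps(x,\xi)$ modulo $S^{m-1}$, and all higher-order terms in the expansion of $\sigma_{F,\eps}$ already live in $S^{m-1}$ with moderate seminorms. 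Hence $(p_\eps-\sigma_{F,\eps})_\eps\in\mM_{S^{m-1}(\R^{2n})}$.

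The main obstacle is the bookkeeping in part (ii): one must verify that Kumano-go's reduction of a $(1/2,0)$-type double symbol to a standard left symbol only invokes constants depending on finitely many seminorms of $p_{F,\eps}$, so that no hidden $\eps$-dependence escapes. Once this uniform dependence is checked, the moderateness of $(p_\eps)_\eps$ propagates through the oscillatory integral bounds and the stationary-phase-style expansion in part (iii) without further effort.
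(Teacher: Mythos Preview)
Your proposal is correct and follows the same route as the paper: the paper's own proof simply refers to Theorem~4.2 in \cite[Chapter~3]{Kumano-go:81} together with Theorems~2.5 and~3.1 in \cite[Chapter~2]{Kumano-go:81} and observes that ``a careful inspection of the proofs shows that the expected moderateness estimates hold'', which is precisely the seminorm bookkeeping you outline. One point deserving a word of caution in your sketch of (iii): the generic reduction of a $S^{m,0}_{1/2,0}$ double symbol only places the first-order term of the asymptotic expansion in $S^{m-1/2}$, not $S^{m-1}$; the full order-one gain needed for $(p_\eps-\sigma_{F,\eps})_\eps\in\mM_{S^{m-1}}$ genuinely uses the special structure of $p_F$ (parity of $q$ and a further Taylor step), exactly as in Kumano-go's Theorem~4.2, so make sure you invoke that argument rather than the generic $(1/2,0)$ calculus alone.
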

The proof of Theorem \ref{theo_Friedrichs_1} is obtained by combining Theorem 4.2 in \cite[Chapter 3]{Kumano-go:81} with Theorems 2.5 and 3.1 in
\cite[Chapter 2]{Kumano-go:81}. A careful inspection of the proofs shows that the expected moderateness estimates hold. From Theorem 4.3 in \cite{Kumano-go:81} we easily obtain the following statement.
\begin{theorem}
\label{theo_Friedrichs_2}
Let $P_\eps=P_\eps(x,D_x)$ be an $l\times l$ matrix of pseudodifferential operators with $(P_{\eps})_\eps\in \mM_{S^m(\R^{2n})}$. Let $P_{F,\eps}$ be the Friedrichs part of $P_\eps$. Then the operator $P_{F,\eps}-P_\eps$ is given by a matrix with symbol entries in $\mM_{S^{m-1}(\R^{2n})}$. In addition,
\begin{itemize}
\item[(i)] if $P_\eps(x,\xi)$ is Hermitian, then $P_{F,\eps}$ is self-adjoint, i.e.,
\[
(P_{F,\eps}u,v)=(u,P_{F,\eps} v)
\]
for all $u,v$ with components in $\S(\R^n)$;
\item[(ii)] if $P_\eps(x,\xi)\ge 0$ then the operator $P_{F,\eps}$ is positive, i.e.,
\[
(P_{F,\eps}u,u)\ge 0
\]
for all $u$ with components in $\S(\R^n)$;
\item[(iii)] if $P_\eps(x,\xi)$ is skew-Hermitian ($P_\eps^\ast=-P_\eps$) then
\[
(P_{F,\eps}u,v)=-(u,P_{F,\eps} v)
\]
for all $u,v$ as above.
\end{itemize}
\end{theorem}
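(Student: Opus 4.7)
The statement of Theorem~\ref{theo_Friedrichs_2} is essentially an $\eps$-uniform, matrix-valued version of Theorem~4.3 in \cite{Kumano-go:81}, so my plan is to reduce everything to an integral representation of the sesquilinear form $(P_{F,\eps}u,v)$ that exposes the symbol matrix pointwise.

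\textbf{Step 1 (symbolic reduction).} The preliminary claim that $P_{F,\eps}-P_\eps$ is a matrix with entries in $\mathcal{M}_{S^{m-1}(\R^{2n})}$ is obtained by applying Theorem~\ref{theo_Friedrichs_1}(ii)--(iii) componentwise: each entry $p^{ij}_\eps$ of the matrix symbol generates a Friedrichs symbol $p^{ij}_{F,\eps}$ which can be written as a pseudodifferential operator with moderate symbol $\sigma^{ij}_{F,\eps}$, and $p^{ij}_\eps-\sigma^{ij}_{F,\eps}$ is moderate of order $m-1$. All seminorm constants are independent of $\eps$, so moderateness transfers unchanged to the matrix level.

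\textbf{Step 2 (integral representation).} Starting from
\[
 P_{F,\eps}(\xi,x',\xi')=\int F(\xi,\zeta)\,P_\eps(x',\zeta)\,F(\xi',\zeta)\,d\zeta,
\]
the key identity I would establish at the level of representatives is
\[
 (P_{F,\eps}u,v)_{L^2(\R^n)^l}=\iint \overline{w_{v,\eps}(x',\zeta)}^{\,T}\,P_\eps(x',\zeta)\,w_{u,\eps}(x',\zeta)\,dx'\,d\zeta,
\]
where $w_{u,\eps}(x',\zeta):=\int e^{ix'\xi}F(\xi,\zeta)\,\widehat{u}(\xi)\,d\xi$ and similarly for $w_{v,\eps}$. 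This identity is obtained from Parseval's formula together with the normalisation $\int q(\sigma)^2\,d\sigma=1$ (the evenness of $q$ is not used here), and is just the matrix-valued, $\eps$-tracked version of the computation behind Theorem~4.3 of \cite{Kumano-go:81}. Because $u,v$ have Schwartz components, the oscillatory integrals involved are absolutely convergent, so everything is classically justified.

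\textbf{Step 3 (reading off (i)--(iii)).} Once the representation of Step~2 is in hand, the three symmetry statements are pointwise consequences of the corresponding pointwise properties of $P_\eps(x',\zeta)$.
For (i), Hermiticity $P_\eps(x',\zeta)^\ast=P_\eps(x',\zeta)$ lets us swap $u\leftrightarrow v$ and complex-conjugate the integrand to get $(P_{F,\eps}u,v)=\overline{(P_{F,\eps}v,u)}=(u,P_{F,\eps}v)$.
For (ii), positivity $P_\eps(x',\zeta)\ge 0$ makes the integrand nonnegative when $u=v$, hence $(P_{F,\eps}u,u)\ge 0$.
For (iii), skew-Hermiticity produces a sign change under the same swap, giving $(P_{F,\eps}u,v)=-(u,P_{F,\eps}v)$.

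\textbf{Main obstacle.} There is no genuinely difficult step: the analytic core is the Parseval-based identity of Step~2, and the $\eps$-parameter never enters the argument beyond the observation that the Friedrichs construction is linear in the symbol and its seminorm bounds are uniform in $\eps$ (which is exactly what Theorem~\ref{theo_Friedrichs_1} records). The only point that requires some care is the bookkeeping of matrix indices in Step~2, since one must verify that the integration against $F(\xi,\zeta)F(\xi',\zeta)$ commutes with taking the Hermitian transpose of $P_\eps(x',\zeta)$; this follows from the fact that $F$ is scalar-valued and real. Consequently the entire proof amounts to transcribing the scalar argument of \cite[Ch.~3, Thm.~4.3]{Kumano-go:81} in matrix form and noting uniformity of the bounds in $\eps$.
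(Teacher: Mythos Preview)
Your proposal is correct and follows exactly the approach the paper has in mind: the paper does not give its own proof but simply states that the result is obtained from Theorem~4.3 in \cite{Kumano-go:81}, and what you have written is precisely the matrix-valued, $\eps$-tracked transcription of that argument (with the symbolic reduction delegated to Theorem~\ref{theo_Friedrichs_1}). One cosmetic point: your functions $w_{u,\eps}$, $w_{v,\eps}$ carry a spurious $\eps$-subscript, since neither $F$ nor $u,v$ depend on $\eps$; the $\eps$-dependence enters only through $P_\eps$.
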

We are now ready to prove the G\r{a}rding inequality. For the sake of simplicity we only consider $0$-order generalised pseudodifferential operators. The norms used in the following theorem are the Sobolev norms of order $0$ and $-1/2$ respectively and $(\cdot,\cdot)$ denotes the Hilbert product in ${L^2(\R^n)}^m$ (or equivalently ${H^0(\R^n)}^m$).
\begin{theorem}
\label{theo_Garding}
Let $A(x,D_x)$ be an $m\times m$ matrix of generalised pseudodifferential operators with symbol in $\G_{S^0(\R^{2n})}$. If there exist a representative $(A_\eps)_\eps$ of $A(x,\xi)$ and strictly nonzero nets $(c_\eps)_\eps$ and $(r_\eps)_\eps$ such that
\beq
\label{bound_re}
\Re\, A_\eps(x,\xi)\ge c_\eps I
\eeq
for $|\xi|\ge r_\eps$ then there exists a moderate net $(c_{1,\eps})_\eps$ such that
\beq
\label{Garding}
\Re(A_\eps(x,D_x)u,u)\ge c_{\eps}\Vert u\Vert^2_0-c_{1,\eps}\Vert u\Vert^2_{-1/2}
\eeq
for all vectors $u$ with components in $\S(\R^n)$ and all $\eps\in(0,1]$.
\end{theorem}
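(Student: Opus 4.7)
The plan is to adapt the classical Friedrichs symmetrisation argument (\cite[Theorem 4.4]{Kumano-go:81}) with an $\eps$-dependent cutoff, while keeping track of the moderateness of all emerging constants.

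First, I would reduce from $A_\eps(x,D_x)$ to the operator $\Re A_\eps(x,D_x)$ with symbol $\Re A_\eps(x,\xi)=\tfrac12(A_\eps+A_\eps^\ast)(x,\xi)$. By the adjoint symbolic calculus, the symbol of $A_\eps(x,D_x)+A_\eps(x,D_x)^\ast$ differs from $2\Re A_\eps(x,\xi)$ by a net in $\mM_{S^{-1}(\R^{2n})}$; Sobolev-boundedness of order $-1$ operators, viewed as $H^{-1/2}\to H^{1/2}$ maps, then yields
\[
\Re(A_\eps(x,D_x)u,u)=(\Re A_\eps(x,D_x)u,u)+(T_\eps u,u),\qquad |(T_\eps u,u)|\le c'_\eps\|u\|^2_{-1/2},
\]
with $(c'_\eps)_\eps$ moderate.

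Next, since \eqref{bound_re} only gives a pointwise lower bound for $|\xi|\ge r_\eps$, I would globalise it via a cutoff. Fix $\phi\in\Cinf(\R^n,[0,1])$ with $\phi=0$ on $\{|\xi|\le 1\}$, $\phi=1$ on $\{|\xi|\ge 2\}$, and set $\phi_\eps(\xi):=\phi(\xi/r_\eps)$; the scaling identity $\partial^\alpha\phi_\eps(\xi)=r_\eps^{-|\alpha|}(\partial^\alpha\phi)(\xi/r_\eps)$ together with the strict nonzeroness of $(r_\eps)_\eps$ ensures $(\phi_\eps)_\eps\in\mM_{S^0(\R^n)}$. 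Let $(M_\eps)_\eps$ be a moderate upper bound for $\sup_{x,\xi,z\neq 0}|\overline{z}^T\Re A_\eps(x,\xi)z|/|z|^2$, available from the $(0,0)$-seminorm of $A_\eps$, and define
\[
N_\eps(x,\xi):=\Re A_\eps(x,\xi)-c_\eps\phi_\eps(\xi)I+M_\eps(1-\phi_\eps(\xi))I.
\]
A case check in the three regions $\{|\xi|\ge 2r_\eps\}$, $\{r_\eps\le|\xi|\le 2r_\eps\}$, $\{|\xi|\le r_\eps\}$ (using \eqref{bound_re} on the first two and the definition of $M_\eps$ on the third) shows $N_\eps\ge 0$ globally, and $(N_\eps)_\eps\in\mM_{S^0(\R^{2n})}$.

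Theorem \ref{theo_Friedrichs_2} applied to the Hermitian, positive semidefinite $N_\eps$ then produces a positive operator $N_{F,\eps}$ with $N_\eps-N_{F,\eps}\in\mM_{S^{-1}(\R^{2n})}$, whence
\[
(N_\eps(x,D_x)u,u)\ge((N_\eps-N_{F,\eps})u,u)\ge-\tilde c_\eps\|u\|^2_{-1/2}
\]
with $(\tilde c_\eps)_\eps$ moderate. Unwinding the definition of $N_\eps$, writing $(\phi_\eps(D_x)u,u)=\|u\|^2-((1-\phi_\eps)(D_x)u,u)$, and using the pointwise estimate $(1-\phi_\eps(\xi))\le \lara{2r_\eps}\lara{\xi}^{-1}$ (valid on the support $\{|\xi|\le 2r_\eps\}$ of $1-\phi_\eps$), one assembles
\[
\Re(A_\eps u,u)\ge c_\eps\|u\|^2-c_{1,\eps}\|u\|^2_{-1/2},
\]
where $c_{1,\eps}$ is a moderate combination of $c'_\eps$, $\tilde c_\eps$ and $(c_\eps+M_\eps)\lara{2r_\eps}$. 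The main point requiring attention is the moderateness propagation through the cutoff $\phi_\eps$ (ensured by the strict nonzeroness of $(r_\eps)_\eps$) and through the Friedrichs remainder (automatic from Theorems \ref{theo_Friedrichs_1}--\ref{theo_Friedrichs_2}), after which the argument closes uniformly in $\eps\in(0,1]$.
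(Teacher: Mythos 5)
Your proof is correct and follows essentially the same strategy as the paper: globalise the lower bound on $\Re\, A_\eps$ by a moderate $r_\eps$-scaled cutoff to obtain a nonnegative Hermitian symbol, then invoke the Friedrichs symmetrisation (Theorems \ref{theo_Friedrichs_1}--\ref{theo_Friedrichs_2}) to pass from the nonnegative symbol to a positive operator at the cost of a moderate order $-1$ remainder controlled by $\|u\|^2_{-1/2}$. The only cosmetic differences are that the paper absorbs the skew-Hermitian part of $A_\eps - c_\eps I$ via Theorem \ref{theo_Friedrichs_2}(iii) rather than via the adjoint symbolic calculus, and that it cuts $B_\eps=\Re A_\eps - c_\eps I$ to a nonnegative $D_\eps$ with an $\mM_{S^{-\infty}(\R^{2n})}$ remainder instead of adding and subtracting $M_\eps(1-\phi_\eps)I$; both devices deliver the same moderate net $(c_{1,\eps})_\eps$.
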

\begin{proof}
Let $A_{1,\eps}(x,\xi)=A_{\eps}(x,\xi)-c_\eps I$. Then
\[
\Re(A_{1,\eps}(x,D_x)u,u)=\Re(A_\eps(x,D_x)u,u)-c_\eps\Vert u\Vert^2_0.
\]
We define the operators $B_{\eps}(x,D_x)$ and $C_{\eps}(x,D_x)$ via the symbols $\Re\, A_\eps(x,\xi)-c_\eps I$ and $(A_\eps(x,\xi)-A_\eps(x,\xi)^\ast)/2$, respectively. Hence $A_{1,\eps}(x,D_x)=B_\eps(x,D_x)+C_\eps(x,D_x)$. Making use of the hypothesis \eqref{bound_re} and multiplying the entries of $B_\eps$ by a suitable cut-off net $(\psi_\eps(\xi))_\eps=(\psi(\xi/r_\eps))_\eps$ we obtain that there exist matrices $D_\eps(x,\xi)$ and $R_\eps(x,\xi)$ with entries in $\mM_{S^0(\R^{2n})}$ and $\mM_{S^{-\infty}(\R^{2n})}$ respectively such that
\[
A_{1,\eps}(x,\xi)=D_\eps(x,\xi)+C_\eps(x,\xi)+R_\eps(x,\xi),
\]
and $D_\eps(x,\xi)\ge 0$. We now take the Friedrichs part of the operators $D_\eps(x,D_x)$ and $C_\eps(x,D_x)$. Theorem \ref{theo_Friedrichs_2} yields
\[
A_{1,\eps}(x,D_x)=D_{F,\eps}(x,D_x)+C_{F,\eps}(x,D_x)+S_\eps(x,D_x),
\]
where $S_\eps$ has symbol entries in $\mM_{S^{-1}(\R^{2n})}$, $D_{F,\eps}(x,D_x)$ is positive and $(C_\eps(x,D_x)u,v)=-(u, C_\eps(x,D_x)v)$ for all $u,v$ with components in $\S(\R^n)$. Hence, combining this decomposition with the Sobolev mapping properties of generalised pseudodifferential operators we get the inequality
\begin{multline}
\label{est_re_1}
\Re(A_{1,\eps}(x,D_x)u,u)=(D_{F,\eps}(x,D_x)u,u)+\Re(S_\eps(x,D_x)u,u)\ge -\Vert S_\eps(x,D_x)u\Vert_{1/2}\Vert u\Vert_{-1/2}\\
\ge -c_{1,\eps}\Vert u\Vert_{-1/2}^2,
\end{multline}
where the net $(c_{1,\eps})_\eps$ depends on the symbol seminorms of the entries of $(S_\eps(x,\xi))_\eps$. The assertion \eqref{Garding} follows immediately from \eqref{est_re_1}.
\end{proof}
We conclude this appendix by stating the G\r{a}rding inequality under different assumptions of moderateness at the level of representatives. Focusing on the Friedrichs part of a pseudodifferential operator one can state Theorem \ref{theo_Friedrichs_1} in terms of slow scale symbols and logarithmic slow scale symbols. These two new versions, more specific under the point of view of the moderateness assumptions, are stated by replacing $\mM$ with $\mM^{\ssc}$ and $\mM^{\rm{lsc}}$ respectively, throughout the statement of the theorem. This leads to the following G\r{a}rding inequalities.
\begin{theorem}
\label{theo_Garding_23}
\leavevmode
\begin{itemize}
\item[(i)] Let $A(x,D_x)$ be an $m\times m$ matrix of generalised pseudodifferential operators with symbol in $\G^\ssc_{S^0(\R^{2n})}$. If there exist a representative $(A_\eps)_\eps$ of $A(x,\xi)$ and slow scale nets $(c^{-1}_\eps)_\eps$ and $(r^{-1}_\eps)_\eps$ such that
\beq
\label{bound_re_sc}
\Re\, A_\eps(x,\xi)\ge c_\eps I
\eeq
for $|\xi|\ge r_\eps$ then there exists a slow scale net $(c_{1,\eps})_\eps$ such that
\beq
\label{Garding_sc}
\Re(A_\eps(x,D_x)u,u)\ge c_{\eps}\Vert u\Vert^2_0-c_{1,\eps}\Vert u\Vert^2_{-1/2}
\eeq
for all vectors $u$ with components in $\S(\R^n)$ and all $\eps\in(0,1]$.
\item[(ii)] Let $A(x,D_x)$ be an $m\times m$ matrix of generalised pseudodifferential operators with symbol in $\G^{\rm{lsc}}_{S^0(\R^{2n})}$. If there exist a representative $(A_\eps)_\eps$ of $A(x,\xi)$ and logarithmic slow scale nets $(c^{-1}_\eps)_\eps$ and $(r^{-1}_\eps)_\eps$ such that
\beq
\label{bound_re_lsc}
\Re\, A_\eps(x,\xi)\ge c_\eps I
\eeq
for $|\xi|\ge r_\eps$ then there exists a logarithmic slow scale net $(c_{1,\eps})_\eps$ such that
\beq
\label{Garding_lsc}
\Re(A_\eps(x,D_x)u,u)\ge c_{\eps}\Vert u\Vert^2_0-c_{1,\eps}\Vert u\Vert^2_{-1/2}
\eeq
for all vectors $u$ with components in $\S(\R^n)$ and all $\eps\in(0,1]$.
\end{itemize}
\end{theorem}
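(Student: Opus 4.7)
The strategy is to rerun the proof of Theorem \ref{theo_Garding} line by line, keeping track of the asymptotic scale of each net that appears, and to invoke the slow scale and logarithmic slow scale variants of Theorems \ref{theo_Friedrichs_1}--\ref{theo_Friedrichs_2} announced just before the statement. Since the arguments for parts (i) and (ii) are identical modulo the substitution of ``slow scale'' by ``logarithmic slow scale'' throughout, and since both classes of nets are closed under finite sums, finite products, and under bounds of the form $O(r_\eps^{-N})$ whenever $(r_\eps^{-1})_\eps$ lies in the same class, it is enough to describe part (i).

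Fix a representative $(A_\eps)_\eps \in \mM^\ssc_{S^0(\R^{2n})}$ satisfying \eqref{bound_re_sc}, set $A_{1,\eps}=A_\eps-c_\eps I$, and split
\[
B_\eps(x,\xi)=\Re\,A_\eps(x,\xi)-c_\eps I,\qquad C_\eps(x,\xi)=\frac{A_\eps(x,\xi)-A_\eps(x,\xi)^\ast}{2},
\]
so that $A_{1,\eps}(x,D_x)=B_\eps(x,D_x)+C_\eps(x,D_x)$ with both $(B_\eps)_\eps,(C_\eps)_\eps\in\mM^\ssc_{S^0(\R^{2n})}$. Now truncate near the origin by $\psi_\eps(\xi)=\psi(\xi/r_\eps)$, with a fixed $\psi\in\Cinf(\R^n)$ vanishing for $|\xi|\le 1/2$ and equal to $1$ for $|\xi|\ge 1$. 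Each $\xi$-derivative of $\psi_\eps$ carries a factor $r_\eps^{-1}$, a slow scale net by hypothesis, so $(\psi_\eps)_\eps$ is slow scale bounded in $S^0(\R^{2n})$. Writing $B_\eps=\psi_\eps B_\eps+(1-\psi_\eps)B_\eps=:D_\eps+R_\eps$, the pointwise bound \eqref{bound_re_sc} gives $D_\eps(x,\xi)\ge 0$ for every $\eps$, while $R_\eps$ is supported in $\{|\xi|\le r_\eps\}$ and its seminorms in $S^{-N}(\R^{2n})$ are slow scale bounded for every $N\in\N$; hence $(D_\eps)_\eps\in\mM^\ssc_{S^0}$ and $(R_\eps)_\eps\in\mM^\ssc_{S^{-\infty}}$.

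Applying the slow scale Friedrichs construction to $D_\eps$ and $C_\eps$, the errors $D_\eps-D_{F,\eps}$ and $C_\eps-C_{F,\eps}$ are slow scale moderate in $S^{-1}(\R^{2n})$, the operator $D_{F,\eps}(x,D_x)$ is positive, and $C_{F,\eps}(x,D_x)$ is skew-adjoint so that $\Re(C_{F,\eps}u,u)=0$. Absorbing the three $S^{-1}$ contributions together with $R_\eps(x,D_x)$ into a single operator $S_\eps(x,D_x)$ whose symbol lies in $\mM^\ssc_{S^{-1}(\R^{2n})}$, one obtains
\[
\Re(A_\eps(x,D_x)u,u)-c_\eps\Vert u\Vert_0^2=(D_{F,\eps}(x,D_x)u,u)+\Re(S_\eps(x,D_x)u,u)\ge\Re(S_\eps(x,D_x)u,u).
\]
The Sobolev continuity estimate \eqref{sob_continuity_eps} applied to the order $-1$ operator $S_\eps$, viewed as a map $H^{-1/2}\to H^{1/2}$, yields $\Vert S_\eps u\Vert_{1/2}\le c_{1,\eps}\Vert u\Vert_{-1/2}$ with $(c_{1,\eps})_\eps$ slow scale (the continuity constant being a finite product and sum of slow scale seminorms of $S_\eps$). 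By the duality estimate recalled in Section~1, $|\Re(S_\eps u,u)|\le\Vert S_\eps u\Vert_{1/2}\Vert u\Vert_{-1/2}\le c_{1,\eps}\Vert u\Vert_{-1/2}^2$, which gives \eqref{Garding_sc}. Part (ii) is identical with ``slow scale'' replaced by ``logarithmic slow scale''.

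The main obstacle is purely one of bookkeeping: one has to verify that the slow scale (resp.\ lsc) moderateness propagates through the dilated cut-off $\psi(\xi/r_\eps)$ -- which is precisely where the assumption that $(r_\eps^{-1})_\eps$ is slow scale (resp.\ lsc) is used -- and through the construction of the Friedrichs part, by inspecting the proofs in \cite[Chapter 3]{Kumano-go:81} to confirm that every constant built there is a finite sum of products of symbol seminorms of the input. Once these observations are in place, every net produced in the proof of Theorem \ref{theo_Garding} inherits the desired refined growth, and \eqref{Garding_sc}, \eqref{Garding_lsc} follow.
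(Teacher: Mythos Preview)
Your proposal is correct and follows exactly the approach the paper intends: the paper does not give a separate proof of Theorem~\ref{theo_Garding_23} but merely observes that the slow scale (resp.\ logarithmic slow scale) versions of Theorems~\ref{theo_Friedrichs_1}--\ref{theo_Friedrichs_2} allow one to rerun the proof of Theorem~\ref{theo_Garding} with refined bookkeeping, which is precisely what you do. One small cosmetic point: with your choice of cut-off ($\psi=0$ for $|\xi|\le 1/2$, $\psi=1$ for $|\xi|\ge 1$), the support of $\psi_\eps$ spills into $\{r_\eps/2\le|\xi|<r_\eps\}$ where \eqref{bound_re_sc} is not assumed; simply take $\psi=0$ for $|\xi|\le 1$ instead (as in the paper's proof of Theorem~\ref{theo_Garding}) so that $D_\eps=\psi_\eps B_\eps$ is genuinely supported in $\{|\xi|\ge r_\eps\}$ and hence $\ge 0$.
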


\bibliographystyle{abbrv}
\newcommand{\SortNoop}[1]{}

\end{document}